\newtheorem{thm}{Theorem}[section]
\newtheorem{cor}[thm]{Corollary}
\newtheorem{lem}[thm]{Lemma}
\newtheorem{prop}[thm]{Proposition}
\theoremstyle{definition}
\theoremstyle{remark}
\newtheorem{rem}[thm]{Remark}
\newtheorem{ex}[thm]{Example}
\numberwithin{equation}{section}
\title{ Around Strongly Operator Convex Functions\\}
\author{Nahid Gharakhanlu\footnote{nahid.gh80@gmail.com}~ and ~Mohammad Sal Moslehian
	\footnote{moslehian@um.ac.ir}\\
 \\
\small Department of Pure Mathematics, Ferdowsi University of Mashhad,\\ \small P. O. Box 1159, Mashhad, 91775, Iran.
 }
\begin{document}
 \maketitle

\noindent \abstract In this paper, we obtain the subadditivity inequality of strongly operator convex functions on $(0, \infty)$ and $(-\infty,0)$. Applying the properties of operator convex functions, we deduce the subadditivity property of operator monotone  functions on $(0, \infty)$. We give new operator inequalities involving  strongly operator convex functions on an interval and the weighted operator means. We also investigate relations between strongly operator convex functions and Kwong functions on $(0, \infty)$.  Moreover, we study the strongly operator convex functions on $(a, \infty)$  with $a>-\infty$ and also on left half line $(-\infty,b)$ with $b< \infty$. We show that any non-constant strongly operator convex function on $(a, \infty)$ is strictly operator decreasing, and any non-constant strongly operator convex function on $(-\infty,b)$ is strictly operator monotone. Consequently,  if $g$ is a strongly operator convex function on $(a, \infty)$ or $(-\infty,b)$, we estimate lower bounds of $|g(A)-g(B)|$ whenever $A-B>0$.

\bigskip
\noindent{\textbf{ AMS Subject Classification:} } 47A63; 47A30; 47B10.\\
\noindent {\textbf{keywords:}}  Strongly operator convex; Operator monotone function; Integral representation; Subadditivity; Operator inequality.

\section{Introduction}\label{sec1}

Let $\mathbb{B}(\mathcal{H})$ denote the algebra of all bounded linear operators on a complex Hilbert space $(\mathcal{H}, \langle \cdot,\cdot\rangle)$ equipped with the operator norm $\Vert \cdot \Vert$. 
 An operator $A\in \mathbb{B}(\mathcal{H})$ is called positive if $\langle Ax , x\rangle \geq 0$ holds for every $x \in \mathcal{H}$ and then we write $A\geq 0$. We write $A>0$ when the operator $A$ is positive and invertible (called also strictly positive). When $-A$ positive, we say that $A$ is negative. For self-adjoint operators $A, B\in \mathbb{B}(\mathcal{H})$, we say $A\leq B$ if $B-A\geq 0$.  
 Let $f$ be a  continuous real-valued function defined on an interval $J$. For a bounded self-adjoint operator $A$ on a Hilbert space whose spectrum ${\rm sp}(A)$ is in $J$, $f(A)$ stands for the continuous functional calculus. A function $f$ is called operator monotone if for all self-adjoint operators $A, B\in \mathbb{B}(\mathcal{H})$ with spectra in $J$, $A\leq B$ implies $f(A)\leq f(B)$. A continuous real-valued function $f$ is said to be operator convex if 
\begin{equation*}
f((1-\alpha) A+ \alpha B)\leq (1-\alpha) f(A)+ \alpha f(B),
\end{equation*}
 for all $\alpha \in [0,1]$. A function $f$ is called operator decreasing (operator concave) if so is $-f$. For more details, we refer the readers to \cite[Chapter V]{5}. Throughout the paper, we assume that all functions are continuous.
 
Regarding the subadditivity and  supperadditivity properties, Moslehian and Najafi obtained the subadditivity condition of operator monotone functions as follows. 
\begin{thm}\emph{\cite[Theorem 2.1]{MN}}\label{3}
Let $f$ be a non-negative operator monotone function on $[0, \infty)$ and $A, B$ be positive operators. Then 
\begin{equation*}
f(A+B)\leq f(A)+f(B),
\end{equation*}
 if and only if $AB+BA\geq 0$.
\end{thm}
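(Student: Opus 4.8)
The plan is to exploit the integral (Löwner) representation of operator monotone functions on $[0,\infty)$ and to reduce the desired inequality to a single family of elementary building blocks, where the commutator condition can be read off directly. Every non-negative operator monotone $f$ on $[0,\infty)$ admits a representation of the form
$$f(t) = a + bt + \int_{(0,\infty)} \frac{\lambda t}{\lambda + t}\, d\mu(\lambda),$$
with $a = f(0) \ge 0$, $b \ge 0$, and $\mu$ a positive measure. Since $f$ is increasing, non-negativity is equivalent to $a \ge 0$, and this is exactly the hypothesis consumed by the constant term. Writing $g_\lambda(t) = \frac{\lambda t}{\lambda+t}$, I note the two identities $g_\lambda(X) = \lambda I - \lambda^2 (X+\lambda I)^{-1}$ and $g_\lambda(X) = (X^{-1}+\lambda^{-1}I)^{-1}$ (the parallel sum of $X$ with $\lambda I$), which will do all the work.

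For the sufficiency direction (assuming $AB+BA\ge0$) I would check subadditivity term by term. The constant contributes $a \le 2a$, which holds precisely because $a\ge0$; the linear term gives the equality $b(A+B)=bA+bB$; and, using $d\mu\ge0$ together with the fact that functional calculus commutes with the integral, the whole statement reduces to the key lemma that for every $\lambda>0$ one has $g_\lambda(A+B)\le g_\lambda(A)+g_\lambda(B)$. By the first identity above, and after the harmless rescaling $A\mapsto A/\lambda$, $B\mapsto B/\lambda$ (which preserves $AB+BA\ge0$), this is equivalent to the resolvent inequality
$$(A+I)^{-1}+(B+I)^{-1}-(A+B+I)^{-1}\le I.$$
Setting $P=(A+I)^{-1}$, $Q=(B+I)^{-1}$, $R=(A+B+I)^{-1}$ one has the clean relation $P^{-1}+Q^{-1}=R^{-1}+I$, so the target becomes $R\ge P+Q-I$; I would attack this through the second resolvent identity $(A+I)^{-1}-(A+B+I)^{-1}=(A+I)^{-1}B(A+B+I)^{-1}$ together with its symmetric counterpart, organizing the resulting cross terms so that $AB+BA\ge0$ forces the remainder to be positive.

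For the necessity direction I would run the building blocks in reverse via a perturbative expansion. For $\lambda>\|A\|+\|B\|$ the Neumann series yields $g_\lambda(X)=X-\lambda^{-1}X^2+O(\lambda^{-2})$, whence
$$g_\lambda(A)+g_\lambda(B)-g_\lambda(A+B)=\lambda^{-1}(AB+BA)+O(\lambda^{-2}),$$
so that $\lambda\bigl(g_\lambda(A)+g_\lambda(B)-g_\lambda(A+B)\bigr)\to AB+BA$ as $\lambda\to\infty$. Since each $g_\lambda$ is itself a non-negative operator monotone function, subadditivity makes every term on the left positive, and a norm limit of positive operators is positive; hence $AB+BA\ge0$.

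The main obstacle is the sufficiency lemma, i.e.\ the resolvent inequality $R\ge P+Q-I$ under $AB+BA\ge0$: unlike the scalar case, where it follows from concavity of $g_\lambda$ through the origin, the operator version genuinely requires the commutator hypothesis, and the non-commutativity of $P,Q,R$ makes the bookkeeping of the second-resolvent cross terms delicate. A secondary, purely logical point to flag is the precise reading of the biconditional: since affine maps such as $f(t)=t+1$ are subadditive for \emph{every} $A,B$, the necessity half is naturally understood as applying to the full family $g_\lambda$ (equivalently, to the non-degenerate part of the representing measure), which is exactly what the expansion above uses.
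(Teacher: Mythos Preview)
This theorem is quoted in the paper from \cite[Theorem 2.1]{MN} as a background result in the Introduction; the paper does not supply its own proof, so there is no in-paper argument to compare your proposal against.

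On the substance of your outline: the reduction via the L\"owner integral representation to the single family $g_\lambda(t)=\lambda t/(\lambda+t)$ is the natural route and is exactly the strategy of the original Moslehian--Najafi paper. Your necessity argument through the Neumann expansion is clean and essentially complete, and your remark that the biconditional must be read as quantified over the full family (since affine $f$ are trivially subadditive) is correct and worth stating. For sufficiency you correctly isolate the heart of the matter---the resolvent inequality $(A+I)^{-1}+(B+I)^{-1}-(A+B+I)^{-1}\le I$ under $AB+BA\ge 0$---but you stop short of proving it; the second-resolvent bookkeeping you allude to is not yet an argument. In the original source this step is closed by an explicit algebraic manipulation that exhibits the difference as a congruence of $AB+BA$ by suitable resolvent factors, so the hypothesis enters exactly once. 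Until you carry out that computation (or an equivalent one), the sufficiency half remains a plan rather than a proof.
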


We recall that $AB+BA$ is called the symmetrized product of $A, B \in \mathbb{B}(\mathcal{H})$. If $A$ and $B$ are positive operators, then so is $A+B$. In general, the symmetrized product of two positive operators is not positive. For example, consider two positive operators $A=\begin{bmatrix} 1 & 0 \\ 0 & 0 \end{bmatrix}$ and $B= \begin{bmatrix} 1 & 1 \\ 1 & 1 \end{bmatrix}$. It can be verified that $AB+BA$ is not a positive operator.

Let $f$ be a non-constant operator monotone functions on $[0, \infty)$. Then, it is strictly operator monotone in the sense that If $A>B>0$, then $f(A)>f(B)$; see \cite[Theorem 2.1]{Furuta2} and \cite[Proposition 2.2]{MN2012}. Furuta estimated lower bounds of $f(A)-f(B)$ for all non-constant operator monotone functions on $[0, \infty)$ whenever $A-B>0$ as follows.
\begin{thm}\emph{\cite[Theorem 3.1]{Furuta}}\label{17}
Let $A$ and $B$ be positive invertible operators such that $A-B\geq m>0$ for some scalar $m>0$. Then
\begin{equation*}
f(A)-f(B)\geq f(\Vert B\Vert +m)-f(\Vert B \Vert)\geq f(\Vert A \Vert)-f(\Vert A\Vert -m)> 0,
\end{equation*}
for all non-constant operator monotone functions on $[0, \infty)$. 
\end{thm}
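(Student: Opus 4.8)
The plan is to establish the three inequalities from left to right, exploiting two structural features of a non-constant operator monotone function $f$ on $[0,\infty)$: that it is operator concave (hence concave as an ordinary real function; see \cite[Chapter V]{5}), and that as a real function it is strictly increasing (the scalar instance of the strict operator monotonicity recalled above). The device that ties everything together is the auxiliary scalar function $h(t):=f(t+m)-f(t)$ for $t\ge 0$. Since $f$ is concave, the chord slopes $\tfrac{f(t+m)-f(t)}{m}$ are non-increasing in $t$, so $h$ is a non-increasing function on $[0,\infty)$; this single observation drives both the first and the second inequality.

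For the first (operator) inequality I would first record two elementary consequences of the hypothesis $A-B\ge m>0$ with $B\ge 0$: namely $A\ge B+mI$, and $\Vert A\Vert\ge \Vert B\Vert+m$. The latter holds because for every unit vector $x$ we have $\Vert A\Vert\ge\langle Ax,x\rangle\ge\langle Bx,x\rangle+m$, and taking the supremum over unit $x$ on the right gives $\sup_{\Vert x\Vert=1}\langle Bx,x\rangle+m=\Vert B\Vert+m$. Applying operator monotonicity of $f$ to $A\ge B+mI$ (both spectra lie in $[0,\infty)$) gives $f(A)\ge f(B+mI)$, hence
\begin{equation*}
f(A)-f(B)\ge f(B+mI)-f(B)=h(B),
\end{equation*}
where the last equality is the functional calculus identity. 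Because ${\rm sp}(B)\subseteq[0,\Vert B\Vert]$ and $h$ is non-increasing, we have $h(t)\ge h(\Vert B\Vert)$ for every $t\in{\rm sp}(B)$, so the spectral theorem yields $h(B)\ge h(\Vert B\Vert)I=[f(\Vert B\Vert+m)-f(\Vert B\Vert)]I$. Combining the two displays gives the first inequality.

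The second inequality $f(\Vert B\Vert+m)-f(\Vert B\Vert)\ge f(\Vert A\Vert)-f(\Vert A\Vert-m)$ is exactly $h(\Vert B\Vert)\ge h(\Vert A\Vert-m)$, which follows from the monotonicity of $h$ together with $\Vert B\Vert\le\Vert A\Vert-m$ (equivalent to $\Vert A\Vert\ge\Vert B\Vert+m$ recorded above). For the final strict inequality, note that $\Vert A\Vert-m\ge\Vert B\Vert\ge 0$ and $\Vert A\Vert>\Vert A\Vert-m$; since a non-constant operator monotone function on $[0,\infty)$ is strictly increasing as a real function, $f(\Vert A\Vert)-f(\Vert A\Vert-m)>0$.

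The main obstacle is the first inequality, since it is the only genuinely operator-theoretic step: it requires converting the operator hypothesis $A\ge B+mI$ into a scalar lower bound that remains valid as an operator inequality. The two ingredients that make this work are the reduction of $f(A)-f(B)$ to the single-variable functional calculus $h(B)$ via operator monotonicity, and the concavity of $f$, which forces $h$ to be non-increasing so that its spectral minimum over ${\rm sp}(B)$ is controlled by the top of the spectrum $\Vert B\Vert$. Once concavity is available, the remaining two inequalities are purely numerical.
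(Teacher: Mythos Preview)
Your argument is correct. Note, however, that the paper does not itself prove Theorem~\ref{17}: it is quoted from Furuta~\cite{Furuta} in the introduction as background, with no proof supplied. The paper's own contribution in this circle of ideas is Theorem~\ref{20} (the analogue for strongly operator convex functions on a right half-line $(a,\infty)$), and it is instructive to compare your method with the one used there.

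The paper's proof of Theorem~\ref{20} goes through the integral representation~\eqref{eq-soc} together with the explicit resolvent estimate of Corollary~\ref{19}: one first shows $(B-\lambda)^{-1}-(A-\lambda)^{-1}\geq m\,\|B-\lambda\|^{-1}(\|B-\lambda\|+m)^{-1}$ for each $\lambda\le a$, identifies $\|B-\lambda\|=\sigma_B-\lambda$, and then integrates against $d\mu(\lambda)$ to recover $g(\sigma_B)-g(\sigma_B+m)$. Your route is more elementary and avoids integral representations entirely: operator monotonicity reduces $f(A)-f(B)$ to the single-operator expression $h(B)$ with $h(t)=f(t+m)-f(t)$, and ordinary concavity of $f$ (a consequence of operator monotonicity on $[0,\infty)$) forces $h$ to be non-increasing, so the functional calculus immediately gives the scalar lower bound $h(\|B\|)$. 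The same device transplants to the setting of Theorem~\ref{20}: there $g$ is operator convex, hence convex, so $t\mapsto g(t)-g(t+m)$ is non-increasing by the dual chord-slope argument, and one recovers the paper's conclusion without ever invoking~\eqref{eq-soc}. The integral-representation method buys no extra generality for these particular inequalities, though it is the natural framework for several other results in the paper.
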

 
 In  \cite{Davis}, Davis proved that a function $g$ is an operator convex  on $J$ if and only if $P g(PAP) P\leq P g(A) P$ for every $A$ with spectrum in $J$ and for every orthogonal projection $P$. If $Pg(PAP)P\leq g(A)$, then $g$ is called a strongly operator convex function on $J$ and then  we write $g\in\textbf{SOC}(J)$ \cite{Brown2, Brown1}.  We observe  that a strongly operator convex function is operator convex and non-negative. Brown and Uchiyama  obtained the following characterization of strongly operator convex functions which plays an essential role in our investigation.

\begin{thm}\emph{\cite[Proposition 2.3]{BU}}\label{1}
Let $g\neq 0$ be a non-constant continuous function, $a> -\infty$ and $b< \infty$. Then 
\begin{enumerate}[(i)]
\item $g\in$ {\rm{\textbf{SOC}}}$(a, \infty)$ if and only if $g(t)>0$ and $g$ is operator decreasing on $(a, \infty)$. In this case, $g$ is represented as
\begin{equation}\label{eq-soc}
g(t)=g(\infty) + \int\limits_{(-\infty ,a]} \frac{1}{t-\lambda} d \mu (\lambda),
\end{equation}
where  $\lambda\in (-\infty ,a]$, $\int\limits_{(-\infty ,a]} \frac{d \mu (\lambda)}{1+\vert \lambda \vert} < \infty$, and $\mu$ is a positive measure on $(-\infty ,a]$.
\item $g\in$ {\rm{\textbf{SOC}}}$(-\infty, b)$ if and only if $g(t)>0$ and $g$ is operator monotone on $(-\infty, b)$. In this case, $g$ is represented as
\begin{equation}\label{eq-soc1}
g(t)=g(-\infty) + \int\limits_{[b,\infty)} \frac{1}{\lambda-t} d \mu (\lambda),
\end{equation}
in which  $\lambda\in [b,\infty)$, $\int\limits_{[b,\infty)} \frac{d \mu (\lambda)}{1+\vert \lambda \vert} < \infty$, and $\mu$ is a positive measure on $[b,\infty)$.
\end{enumerate}
\end{thm}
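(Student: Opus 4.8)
The plan is to prove part (i) in full and to obtain part (ii) from it by the reflection $t\mapsto -t$: if $\tilde g(u):=g(-u)$, then $P\tilde g(PAP)P=Pg\bigl(P(-A)P\bigr)P\le g(-A)=\tilde g(A)$, so $g\in\textbf{SOC}(a,\infty)$ forces $\tilde g\in\textbf{SOC}(-\infty,-a)$; meanwhile ``operator decreasing in $t$'' becomes ``operator monotone in $u$'', and the kernel $\tfrac1{t-\lambda}$ with $\lambda\le a$ becomes $\tfrac1{\lambda'-u}$ with $\lambda'=-\lambda\ge -a$. Setting $b=-a$ transforms the representation \eqref{eq-soc} into \eqref{eq-soc1}. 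Since this reflection is an involution carrying the class in (i) bijectively onto the class in (ii), everything reduces to (i).

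For (i) I would first record the elementary consequences of $g\in\textbf{SOC}(a,\infty)$. Compressing the defining inequality once more by $P$ gives $Pg(PAP)P\le Pg(A)P$, which is precisely Davis's criterion, so $g$ is operator convex (the observation already noted in the text). Non-negativity follows by testing the definition on $A=\mathrm{diag}(t_0,s)$ with $P$ the projection onto the first coordinate: then $Pg(PAP)P=\mathrm{diag}(g(t_0),0)$ and $g(A)=\mathrm{diag}(g(t_0),g(s))$, so $g(A)-Pg(PAP)P=\mathrm{diag}(0,g(s))\ge 0$ forces $g(s)\ge 0$ for every $s\in(a,\infty)$. Finally, specializing the definition to a diagonal $2\times 2$ matrix $A$ and a rank-one projection onto $(\cos\theta,\sin\theta)$ and expanding the resulting $2\times 2$ positivity (its determinant condition) yields $\phi\bigl(cs+(1-c)t\bigr)\ge c\,\phi(s)+(1-c)\,\phi(t)$ for $\phi:=1/g$; thus $1/g$ is concave, and since $g>0$ on the unbounded interval $(a,\infty)$ this forces $g$ to be non-increasing, hence bounded, with finite limit $g(\infty)\ge 0$.

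The analytic heart of the forward direction is then the assertion that a bounded operator convex function on $(a,\infty)$ must be a Cauchy (Stieltjes) transform. Here I would invoke the Löwner--Bendat--Sherman integral representation of operator convex functions on a half-line (see \cite{5}): after translating $(a,\infty)$ to $(0,\infty)$ this expresses $g$ as an affine term plus a quadratic term plus an integral over the complement of the interval of regularized kernels $\tfrac1{t-\lambda}$. Boundedness of $g$ annihilates the linear and quadratic parts and forces the representing measure to satisfy $\int (1+|\lambda|)^{-1}\,d\mu(\lambda)<\infty$, leaving exactly \eqref{eq-soc} with $\mu\ge 0$ supported on $(-\infty,a]$ and $g(\infty)\ge 0$. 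Reading \eqref{eq-soc} off, each kernel $\tfrac1{t-\lambda}$ is operator decreasing and positive on $(a,\infty)$, so $g$ is operator decreasing; and since $g\neq 0$ is non-constant, either $g(\infty)>0$ or $\mu\neq 0$, whence $g(t)>0$ for all $t$. This is the step I expect to be the main obstacle, because it is where Löwner theory (rather than elementary operator manipulation) becomes unavoidable, and because the bookkeeping that locates the support of $\mu$ on $(-\infty,a]$ and extracts the exact growth condition must be done carefully.

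For the converse I would argue in two stages. First, if $g>0$ is operator decreasing on $(a,\infty)$, then $-g$ is operator monotone, hence operator concave on the half-line (a classical fact, \cite{5}), so $g$ is operator convex; being decreasing and positive it is bounded, and the representation step above again produces \eqref{eq-soc}. Second, I would verify directly that every function of the form \eqref{eq-soc} lies in $\textbf{SOC}(a,\infty)$: it suffices to treat the kernels and constants, since the SOC condition is preserved under positive linear combinations and under integration against a positive measure (by continuity of the functional calculus and monotone convergence). A non-negative constant $c$ is SOC because $cP\le cI$, and $\tfrac1{t-\lambda}$ with $\lambda\le a$ is SOC on $(a,\infty)$ by the variational identity $\langle (A-\lambda)^{-1}x,x\rangle=\sup_{z}\bigl(2\,\mathrm{Re}\langle x,z\rangle-\langle (A-\lambda)z,z\rangle\bigr)$: restricting the supremum to $z\in\mathrm{ran}\,P$ and using $\langle Px,z\rangle=\langle x,z\rangle$ there gives $\langle P(PAP-\lambda)^{-1}Px,x\rangle\le\langle (A-\lambda)^{-1}x,x\rangle$, that is, $P(PAP-\lambda)^{-1}P\le (A-\lambda)^{-1}$. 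Combining the two stages closes the equivalence and completes (i), and (ii) then follows by the reflection described above.
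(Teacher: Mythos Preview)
The paper does not prove this statement at all: Theorem~\ref{1} is quoted verbatim from Brown--Uchiyama \cite[Proposition~2.3]{BU} as background in the Introduction, with no argument supplied. There is therefore nothing in the paper to compare your proposal against.

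That said, your sketch is a sound reconstruction of the result. The reduction of (ii) to (i) by $t\mapsto -t$ is clean; the derivation of operator convexity from Davis's criterion and of non-negativity by testing on $2\times2$ diagonals is correct (your determinant computation indeed yields concavity of $1/g$, hence monotonicity of $g$); and the variational formula for $(A-\lambda)^{-1}$ is the standard way to check that each Cauchy kernel lies in $\textbf{SOC}(a,\infty)$. The one point deserving care is the meaning of $Pg(PAP)P$ when $0\notin(a,\infty)$: in the SOC literature this is understood as $g$ applied to the compression $PAP|_{\mathrm{ran}\,P}$, extended by $0$ on $\ker P$, so your $2\times2$ test for non-negativity is legitimate under that convention. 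The step you flag as the obstacle---extracting \eqref{eq-soc} from the L\"owner/Bendat--Sherman representation once boundedness kills the affine and quadratic parts and forces the support and growth conditions on $\mu$---is exactly where \cite{BU} does the work, and your outline matches their approach.
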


If $ 0\neq g\in$ {\rm{\textbf{SOC}}}$(0, \infty)$,  then $f(t)=-g(t)$ is a negative operator monotone function on $(0, \infty)$ and consequently,  Theorem \ref{17} with $f(t)=-g(t)$ on $(0, \infty)$ ensures the following result.
\begin{cor}\label{18}
Let $A$ and $B$ be positive invertible operators such that $A-B\geq m>0$ for some scalar $m>0$. Then
\begin{equation*}
g(B)-g(A)\geq g(\Vert B \Vert)-  g(\Vert B\Vert +m) \geq g(\Vert A\Vert -m)-g(\Vert A \Vert)> 0,
\end{equation*}
for all non-constant $ 0\neq g\in$ {\rm{\textbf{SOC}}}$(0, \infty)$.
\end{cor}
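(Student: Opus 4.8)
The plan is to reduce the statement to Furuta's lower estimate, Theorem~\ref{17}, by working with $f:=-g$ in place of $g$. First I would record the structural information supplied by the hypothesis $0\neq g\in\textbf{SOC}(0,\infty)$: by Theorem~\ref{1}(i) with $a=0$, such a non-constant $g$ is strictly positive and operator decreasing on $(0,\infty)$, so $f=-g$ is a non-constant operator monotone function on $(0,\infty)$. Granting that Theorem~\ref{17} may be invoked for $f$, the desired chain is then purely formal: each difference $f(x)-f(y)$ equals $g(y)-g(x)$, so substituting $f=-g$ into
\[
f(A)-f(B)\geq f(\Vert B\Vert+m)-f(\Vert B\Vert)\geq f(\Vert A\Vert)-f(\Vert A\Vert-m)>0
\]
and reversing every difference yields exactly
\[
g(B)-g(A)\geq g(\Vert B\Vert)-g(\Vert B\Vert+m)\geq g(\Vert A\Vert-m)-g(\Vert A\Vert)>0.
\]

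The one delicate point, and the step I expect to be the main obstacle, is that Theorem~\ref{17} demands operator monotonicity on the \emph{closed} half-line $[0,\infty)$, whereas $f=-g$ is guaranteed operator monotone only on the open half-line $(0,\infty)$ and may fail to extend to $0$ (for instance $g(t)=1/t$). I would remove this gap by using that $A$ and $B$ are positive \emph{invertible}. Since $A-B\geq m>0$ gives $A\geq B$, hence $\min\mathrm{sp}(B)\leq\min\mathrm{sp}(A)$, I can fix a scalar $c$ with $0<c<\min\mathrm{sp}(B)$. Then $A-cI$ and $B-cI$ are again positive invertible, $(A-cI)-(B-cI)=A-B\geq m$, and the translate $h(t):=f(t+c)=-g(t+c)$ is continuous, non-constant, and operator monotone on $[0,\infty)$, because translating the argument preserves operator monotonicity and $c$ keeps us inside $(0,\infty)$.

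I would then apply Theorem~\ref{17} to $h$ with the pair $A-cI,\ B-cI$. Using $h(X)=f(X+cI)$ together with $\Vert A-cI\Vert=\Vert A\Vert-c$ and $\Vert B-cI\Vert=\Vert B\Vert-c$ (valid since these operators are positive), every term reverts to a value of $f$ at the original arguments: $h(A-cI)=f(A)$, $h(\Vert B-cI\Vert)=f(\Vert B\Vert)$, $h(\Vert B-cI\Vert+m)=f(\Vert B\Vert+m)$, and similarly $h(\Vert A-cI\Vert)=f(\Vert A\Vert)$ and $h(\Vert A-cI\Vert-m)=f(\Vert A\Vert-m)$. This recovers the displayed inequality for $f$, whence the substitution $f=-g$ finishes the argument. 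Along the way I would verify that all scalar arguments stay in $(0,\infty)$: from $A\geq B+mI$ one gets $\Vert A\Vert\geq\Vert B\Vert+m$, so $\Vert A\Vert-m\geq\Vert B\Vert>0$ and likewise $\Vert A\Vert-c-m\geq\Vert B\Vert-c>0$, ensuring every evaluation of $g$ (equivalently $h$) is legitimate.
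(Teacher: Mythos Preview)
Your proposal is correct and follows exactly the paper's route, namely invoking Theorem~\ref{17} with $f=-g$. The paper simply applies that theorem ``on $(0,\infty)$'' without further comment, whereas your translation trick with $c$ rigorously bridges the gap between the open half-line on which $f$ lives and the closed half-line required by Theorem~\ref{17}; this extra care is welcome but does not constitute a different argument.
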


Let $A, B >0$ and $\alpha\in (0,1)$. We recall that the $\alpha$-weighted operator arithmetic and weighted operator harmonic mean of $A$ and $B$ are defined, respectively, by
\begin{align*}
A\nabla_{\alpha} B &= (1-\alpha)A+\alpha B,\\
A!_{\alpha} B &= \left( (1-\alpha)A^{-1}+\alpha B^{-1}\right)^{-1}.
\end{align*}
When $\alpha=\frac{1}{2}$,  $A\nabla_{\frac{1}{2}} B$ and $A!_{\frac{1}{2}} B$ are called the operator arithmetic mean and operator harmonic mean, and are denoted by  $A\nabla B$ and $A!B$, respectively. Brown and Uchiyama proved the following   inequalities between operator means and  strongly operator convex functions on an interval $J$.

\begin{thm}\emph{\cite[Proposition 2.1]{BU}}\label{8}
Let $g\neq 0$ be a continuous function on $J$ and $\alpha\in (0,1)$. Then $g\in$ {\rm{\textbf{SOC}}}$(J)$ if and only if $g(t)>0$ and 
\begin{equation*}
g(A)\nabla_{\alpha} g(B)-g(A\nabla_{\alpha} B)\geq \alpha (1-\alpha) (g(A)-g(B))(\alpha g(A)+(1-\alpha)g(B))^{-1} (g(A)-g(B)),
\end{equation*}
for all self-adjoint operators $A,B$ with spectra in $J$.
\end{thm}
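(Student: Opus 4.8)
The plan is to recast the stated inequality so that a Schur-complement structure becomes visible, and then to read the defining inequality $Pg(PAP)P\le g(A)$ of strong operator convexity off a single $2\times 2$ operator matrix in each direction. The starting point is the algebraic identity
\[
g(A)\nabla_{\alpha}g(B)-g(A)!_{\alpha}g(B)=\alpha(1-\alpha)(g(A)-g(B))(\alpha g(A)+(1-\alpha)g(B))^{-1}(g(A)-g(B)),
\]
valid for strictly positive $g(A),g(B)$. I would prove it from the parallel-sum formula $X:Y=X(X+Y)^{-1}Y$: writing $X=g(A)$, $Y=g(B)$ and $M=\alpha X+(1-\alpha)Y$, one gets $g(A)!_{\alpha}g(B)=XM^{-1}Y=YM^{-1}X$, after which expanding $(X-Y)M^{-1}(X-Y)$ and substituting $X=\alpha^{-1}(M-(1-\alpha)Y)$, $Y=(1-\alpha)^{-1}(M-\alpha X)$ collapses everything to $g(A)\nabla_{\alpha}g(B)$. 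Granting $g>0$ (so these averages are invertible), this identity shows the inequality of the theorem is \emph{equivalent} to the single clean inequality $g(A\nabla_{\alpha}B)\le g(A)!_{\alpha}g(B)$, i.e. $g$ sends weighted arithmetic means below the corresponding weighted harmonic means of its values.

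For the forward direction I assume $g\in\textbf{SOC}(J)$; then $g>0$ (SOC functions are non-negative, and strict positivity of a nonzero SOC function is recorded below as a technical point). I would work on $\mathcal H\oplus\mathcal H$ with $C=A\oplus B$, whose spectrum lies in $J$. With $c=\sqrt{1-\alpha}$, $s=\sqrt{\alpha}$, form the unitary $U=\begin{bmatrix}cI&-sI\\ sI&cI\end{bmatrix}$ and set $D=U^{*}CU$; a direct computation gives the $(1,1)$-compression $PDP=A\nabla_{\alpha}B$ for $P=I\oplus0$, while $g(D)=U^{*}g(C)U$ has blocks $(1,1)=g(A)\nabla_{\alpha}g(B)$, $(2,2)=\alpha g(A)+(1-\alpha)g(B)$ and $(1,2)=cs(g(B)-g(A))$. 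The defining inequality $Pg(PDP)P\le g(D)$ says precisely that $g(D)-Pg(PDP)P\ge0$, and taking the Schur complement with respect to its strictly positive $(2,2)$-block produces (using $c^{2}s^{2}=\alpha(1-\alpha)$) exactly the right-hand side of the theorem. Thus no further manipulation is needed: the Schur-complement term \emph{is} the asserted lower bound.

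For the converse I assume $g>0$ together with the stated inequality, equivalently $g(A\nabla_{\alpha}B)\le g(A)!_{\alpha}g(B)$. Fix a projection $P$ and a self-adjoint $A$ with ${\rm sp}(A)\subseteq J$, put $S=2P-I$ (a self-adjoint unitary), and apply the inequality at $\alpha=\tfrac12$ to the pair $A,\ B=SAS$, noting ${\rm sp}(B)={\rm sp}(A)\subseteq J$ and $g(B)=Sg(A)S$. Since $A\nabla_{1/2}B=\tfrac12(A+SAS)$ is the pinching $PAP\oplus(I-P)A(I-P)$, the left-hand side is block diagonal with $(1,1)$-block $g(PAP)$; on the right, $g(A)!_{1/2}g(B)=(\tfrac12 g(A)^{-1}+\tfrac12 Sg(A)^{-1}S)^{-1}$ is the inverse of the pinching of $g(A)^{-1}$, whose $(1,1)$-block, by the block-inversion formula, equals the Schur complement $g(A)_{11}-g(A)_{12}g(A)_{22}^{-1}g(A)_{21}$ of $g(A)$. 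Comparing $(1,1)$-blocks gives $g(PAP)\le g(A)_{11}-g(A)_{12}g(A)_{22}^{-1}g(A)_{21}$, which by the Schur-complement positivity criterion is exactly $g(A)-Pg(PAP)P\ge0$; that is, $g\in\textbf{SOC}(J)$.

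The main obstacle—and the real idea—is the reformulation in the first paragraph: recognizing that the awkward quadratic remainder is nothing but the arithmetic--harmonic gap $g(A)\nabla_{\alpha}g(B)-g(A)!_{\alpha}g(B)$, which in turn is the Schur complement attached to the rotated operator matrix. Once this is seen, both implications reduce to the positivity of one $2\times 2$ block matrix. The remaining points to watch are purely technical: checking that all compressions ($A\nabla_{\alpha}B$, $PAP$, $(I-P)A(I-P)$) have spectra in $J$ so that $g$ may be applied, ensuring the relevant $(2,2)$-blocks are invertible (where $g>0$ is used), and justifying the strict positivity $g(t)>0$ in the forward direction from the non-negativity of SOC functions.
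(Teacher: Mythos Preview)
The paper does not prove this statement; Theorem~\ref{8} is quoted from \cite{BU} and used as a black box. In fact the paper runs the logic in the opposite direction from yours: it takes Theorem~\ref{8} for granted and from it derives both the algebraic identity \eqref{eq8} and the harmonic-mean characterization $g(A\nabla_\alpha B)\le g(A)!_\alpha g(B)$ (this is Theorem~\ref{9}). Your plan instead establishes the identity first and then supplies a self-contained proof of Theorem~\ref{8} via the rotation on $\mathcal H\oplus\mathcal H$ and a Schur complement. The forward implication is carried out correctly, and the reduction of the quadratic remainder to the arithmetic--harmonic gap is exactly the right observation.

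There is, however, a genuine gap in your converse. The statement fixes a single $\alpha\in(0,1)$, so your hypothesis is the inequality for that $\alpha$ only; yet you ``apply the inequality at $\alpha=\tfrac12$'' to the pair $(A,\,SAS)$. When $\alpha\neq\tfrac12$ you have no such inequality to invoke, and the choice $B=SAS$ does not make $(1-\alpha)A+\alpha B$ equal to the pinching $PAP\oplus (I-P)A(I-P)$ unless $\alpha=\tfrac12$. The gap is repairable: by your own identity the hypothesis is equivalent to $g(A\nabla_\alpha B)\le g(A)!_\alpha g(B)$, hence (taking inverses) to $(1/g)(A\nabla_\alpha B)\ge (1/g)(A)\nabla_\alpha (1/g)(B)$, i.e.\ $1/g$ is $\alpha$-operator concave. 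For a continuous function, operator $\alpha$-concavity at one $\alpha\in(0,1)$ propagates by iteration to a dense set of weights and then to all weights by continuity; in particular it holds at $\alpha=\tfrac12$, after which your symmetry argument with $S=2P-I$ goes through verbatim. You should insert this bridging step explicitly.
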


\begin{thm}\emph{\cite[Theorem 3.1]{Uchiyama3}}\label{10}
Let $g\neq 0$ be  continuous function on $J$. Then the following are equivalent to each other.
\begin{enumerate}[(i)]
\item $g\in$ {\rm{\textbf{SOC}}}$(J)$
\item  $g(t)>0$ and $g(A\nabla B)\leq g(A) ! g(B) $
\item  $g(t)>0$ and 
\begin{equation*}
\begin{bmatrix} g(A) & g(A\nabla B) \\ g(A\nabla B) & g(B) \end{bmatrix} \geq \frac{1}{2}\begin{bmatrix} g(A\nabla B) & g(A\nabla B) \\ g(A\nabla B) & g(A\nabla B) \end{bmatrix},
\end{equation*}
\end{enumerate}
for all  self-adjoint operators $A,B$ with spectra in $J$.
\end{thm}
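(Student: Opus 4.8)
The plan is to regard (ii) and (iii) as two repackagings of the estimate in Theorem \ref{8}, and to let that theorem (which already characterizes $\textbf{SOC}(J)$ through the one-parameter family of $\alpha$-inequalities) carry the weight of the equivalence with (i). Write $X=g(A)$, $Y=g(B)$ and $W_\alpha=\alpha X+(1-\alpha)Y$ for positive invertible $X,Y$. I would first record the algebraic identity
\[
(1-\alpha)X+\alpha Y-\alpha(1-\alpha)(X-Y)W_\alpha^{-1}(X-Y)=X\,!_\alpha\,Y ,
\]
which holds because $X\,!_\alpha\,Y=XW_\alpha^{-1}Y$: indeed $(XW_\alpha^{-1}Y)^{-1}=(1-\alpha)X^{-1}+\alpha Y^{-1}$, so this product is self-adjoint and coincides with its adjoint $YW_\alpha^{-1}X$, and expanding $(X-Y)W_\alpha^{-1}(X-Y)$ with this symmetry yields the identity after collecting terms. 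Granting it, the identity says that the right-hand side $\alpha(1-\alpha)(g(A)-g(B))W_\alpha^{-1}(g(A)-g(B))$ of Theorem \ref{8} equals $g(A)\nabla_\alpha g(B)-g(A)\,!_\alpha\,g(B)$; hence for each fixed $\alpha$ the estimate of Theorem \ref{8} is equivalent to the weighted harmonic-mean inequality $g(A\nabla_\alpha B)\le g(A)\,!_\alpha\,g(B)$, which I call $(\mathrm H_\alpha)$. The case $\alpha=\tfrac12$ is exactly condition (ii), and (i)$\Rightarrow$(ii) is then immediate: by Theorem \ref{8} a function in $\textbf{SOC}(J)$ satisfies the $\alpha$-inequality for every $\alpha\in(0,1)$, hence $(\mathrm H_{1/2})$.

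For (ii)$\Leftrightarrow$(iii) I would move to $\mathcal H\oplus\mathcal H$ and conjugate by the self-adjoint unitary $U=\tfrac1{\sqrt2}\bigl[\begin{smallmatrix} I&I\\ I&-I\end{smallmatrix}\bigr]$. Setting $C=g(A\nabla B)$, a direct computation gives $U\,\tfrac12\bigl[\begin{smallmatrix} C&C\\ C&C\end{smallmatrix}\bigr]U^{*}=\bigl[\begin{smallmatrix} C&0\\ 0&0\end{smallmatrix}\bigr]$ and $U\bigl[\begin{smallmatrix} X&C\\ C&Y\end{smallmatrix}\bigr]U^{*}=\tfrac12\bigl[\begin{smallmatrix} X+Y+2C & X-Y\\ X-Y & X+Y-2C\end{smallmatrix}\bigr]$, so (iii) is equivalent to the positivity of $\bigl[\begin{smallmatrix} (X+Y)/2 & (X-Y)/2\\ (X-Y)/2 & (X+Y)/2-C\end{smallmatrix}\bigr]$. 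The $(1,1)$-entry $(X+Y)/2$ is strictly positive, so the Schur complement criterion turns this into $\tfrac{X+Y}{2}-C\ge\tfrac12(X-Y)(X+Y)^{-1}(X-Y)$; the $\alpha=\tfrac12$ case of the identity rewrites the right-hand side as $\tfrac{X+Y}{2}-X\,!\,Y$, and cancelling $\tfrac{X+Y}{2}$ leaves precisely $C\le X\,!\,Y$, i.e. (ii).

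The substantive direction is (ii)$\Rightarrow$(i), where the single midpoint inequality $(\mathrm H_{1/2})$ must be promoted to the whole family $(\mathrm H_\alpha)$. I would argue by dyadic doubling. For $\alpha=k/2^{n}$ with $k$ odd, write $A\nabla_\alpha B$ as the midpoint of $A\nabla_{\beta_1}B$ and $A\nabla_{\beta_2}B$, where $\beta_1=(k-1)/2^{n}$ and $\beta_2=(k+1)/2^{n}$ lie one dyadic level lower; apply $(\mathrm H_{1/2})$ to this pair, then use operator monotonicity of the harmonic mean in each variable to substitute the inductive bounds $g(A\nabla_{\beta_i}B)\le g(A)\,!_{\beta_i}\,g(B)$, and finally the composition law $\bigl(g(A)\,!_{\beta_1}\,g(B)\bigr)\,!\,\bigl(g(A)\,!_{\beta_2}\,g(B)\bigr)=g(A)\,!_{(\beta_1+\beta_2)/2}\,g(B)$, which is immediate from the fact that $!$ averages inverses. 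This produces $(\mathrm H_\alpha)$ for every dyadic $\alpha$, and continuity of $\alpha\mapsto g(A\nabla_\alpha B)$ and $\alpha\mapsto g(A)\,!_\alpha\,g(B)$ extends it to all $\alpha\in(0,1)$. The identity of the first paragraph then restores the $\alpha$-inequality of Theorem \ref{8} for every $\alpha$, so $g\in\textbf{SOC}(J)$.

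I expect this last step to be the main obstacle: the delicate point is precisely the self-improvement from the midpoint mean to its weighted analogues, where one must verify that the iterated harmonic means compose to the correct weight and that monotonicity genuinely permits the inductive bounds to be inserted inside the outer mean. By contrast, the reformulation through the identity and the Schur-complement passage between (ii) and (iii) are routine once the identity $(1-\alpha)X+\alpha Y-\alpha(1-\alpha)(X-Y)W_\alpha^{-1}(X-Y)=X\,!_\alpha\,Y$ is established.
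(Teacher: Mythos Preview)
Your argument is correct, but the step you single out as ``the main obstacle'' is not one.  Theorem~\ref{8} is stated as a biconditional for each \emph{fixed} $\alpha\in(0,1)$: the inequality there, for all $A,B$, already characterizes $\textbf{SOC}(J)$.  So once you have $(\mathrm H_{1/2})$ and feed it through your identity to recover the $\alpha=\tfrac12$ inequality of Theorem~\ref{8}, the converse direction of Theorem~\ref{8} immediately gives $g\in\textbf{SOC}(J)$.  The dyadic promotion from $\alpha=\tfrac12$ to all $\alpha$ is correct (your monotonicity and composition facts for $!$ hold) but entirely unnecessary.  This is exactly how the paper proceeds: Theorem~\ref{9} records your identity as part~(i) and then uses it, together with both directions of Theorem~\ref{8} at a single $\alpha$, to prove the equivalence (i)$\Leftrightarrow$(ii); the Remark after Corollary~\ref{13} then specializes to $\alpha=\tfrac12$ to obtain Theorem~\ref{10}.

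For (ii)$\Leftrightarrow$(iii) you conjugate by $U=\tfrac1{\sqrt2}\bigl[\begin{smallmatrix} I&I\\ I&-I\end{smallmatrix}\bigr]$ and take a Schur complement, which is clean and correct.  The paper instead proves the general $\alpha$-version (Lemma~\ref{12}) via the minimum characterization of the parallel sum \eqref{eq5}, showing directly that the block inequality is equivalent to $C\le A\,!_\alpha\,B$, and again specializes to $\alpha=\tfrac12$.  Both routes are short; yours is perhaps slightly more self-contained at $\alpha=\tfrac12$, while the paper's variational approach yields the weighted statement uniformly in $\alpha$.
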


Let $f$ be a  real-valued function on an interval $(0, \infty)$. Kwong matrices, called also anti-L\"owner matrices, associated with $f$ are defined by
\begin{equation*}
K_{f}(t_{1}, t_{2},...,t_{n})=\left[ \frac{f(t_{i})+f(t_{j})}{t_{i}+t_{j}}     \right]_{i,j=1}^{n},
\end{equation*}
for any distinct real numbers $t_{1}, ...,t_{n}$ in $(0, \infty)$. A function $f$ is  called Kwong  if the Kwong  matrix $K_{f}(t_{1}, t_{2},...,t_{n})\geq 0$  for distinct real numbers $t_{1}, ...,t_{n}$ \cite[Chapter 5]{B1}.  If $f$ is a Kwong function on $(0, \infty)$, we write  $f\in \textbf{Kwong}(0, \infty)$. Kwong \cite{14} showd that if a function $f$ from $(0, \infty)$ into itself is operator monotone, then $f$ is  Kwong , and so is every non-negative operator decreasing function. The statement for non-negative operator  decreasing functions follows from the fact that  $f$ is Kwong if and only if $\frac{1}{f}$ is  Kwong.  Audenaert  showed the following relation between Kwong functions and operator decreasing functions.  

\begin{thm}\emph{\cite[Theorem 2.1]{17}}\label{2}
Let $f$ be a real-valued function on $(0,\infty)$.  If $f$ is a Kwong function  on $(0,\infty)$, then $\frac{f(\sqrt{t})}{\sqrt{t}}$ is a non-negative operator decreasing function on $(0,\infty)$.
\end{thm}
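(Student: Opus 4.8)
The plan is to treat the two conclusions—non‑negativity and operator decrease—separately. Non‑negativity is immediate: the Kwong matrix at a single point $t>0$ is the $1\times1$ positive matrix $\big[\tfrac{f(t)+f(t)}{t+t}\big]=\big[\tfrac{f(t)}{t}\big]$, so $f\ge0$ on $(0,\infty)$ and hence $g(t)=f(\sqrt t)/\sqrt t\ge0$. Everything else concerns the monotonicity, which I would reduce to an integral representation of Kwong functions; that representation is where the real work lies.

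First I would record the elementary reduction that pinpoints the difficulty. By Löwner's theorem, $g$ is operator decreasing on $(0,\infty)$ iff every Löwner matrix $\big[\tfrac{g(t_i)-g(t_j)}{t_i-t_j}\big]$ is negative semidefinite. Substituting $t_i=s_i^2$ and conjugating by the positive diagonal matrix $\mathrm{diag}(s_1,\dots,s_n)$ (a sign‑preserving congruence), the $(i,j)$ entry becomes $\tfrac{s_jf(s_i)-s_if(s_j)}{s_i^2-s_j^2}$, and the identity
\[ \frac{s_jf(s_i)-s_if(s_j)}{s_i^2-s_j^2}=\frac12\Big(\frac{f(s_i)-f(s_j)}{s_i-s_j}-\frac{f(s_i)+f(s_j)}{s_i+s_j}\Big) \]
identifies it as $\tfrac12(L_f-K_f)_{ij}$, where $L_f$ and $K_f$ are the Löwner and Kwong matrices of $f$ at $s_1,\dots,s_n$. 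Thus $g$ is operator decreasing precisely when $K_f\succeq L_f$. This makes the obstacle explicit: the hypothesis supplies only $K_f\succeq0$, and one must use it to dominate the otherwise unrelated matrix $L_f$. No purely algebraic step does this, so a Loewner‑type structure theorem is required.

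The substance of the proof is therefore the representation
\[ f(t)=\beta t+\int_{[0,\infty)}\frac{t}{t^2+\kappa^2}\,d\rho(\kappa),\qquad \beta\ge0,\ \rho\ge0, \]
valid for every Kwong function on $(0,\infty)$. One inclusion is a direct kernel computation: with $f_\kappa(t)=\tfrac{t}{t^2+\kappa^2}$ one finds $\tfrac{f_\kappa(s)+f_\kappa(s')}{s+s'}=\tfrac{ss'+\kappa^2}{(s^2+\kappa^2)(s'^2+\kappa^2)}$, which splits as the sum of the rank‑one positive kernels $\big(\tfrac{s}{s^2+\kappa^2}\big)\big(\tfrac{s'}{s'^2+\kappa^2}\big)$ and $\kappa^2\big(\tfrac{1}{s^2+\kappa^2}\big)\big(\tfrac{1}{s'^2+\kappa^2}\big)$, so each $f_\kappa$, and likewise $f(t)=t$, is Kwong; since $f\mapsto K_f$ is linear, these span a closed convex cone. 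The hard part—and the main obstacle of the whole argument—is the converse, that every Kwong function is such a superposition. I would obtain it either by a Choquet/Krein–Milman analysis showing the above are exactly the extreme rays of the Kwong cone, or by passing to the analytic characterization that a Kwong $f$ extends to the right half‑plane as a real function with non‑negative real part (a positive‑real function) and invoking its Cauer–Herglotz representation, whose imaginary‑axis poles occur in conjugate pairs $\pm i\kappa$ and combine into the kernels $\tfrac{t}{t^2+\kappa^2}$.

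Granting the representation, the conclusion drops out. Dividing by $\sqrt t$ after $t\mapsto\sqrt t$ gives
\[ g(t)=\frac{f(\sqrt t)}{\sqrt t}=\beta+\int_{[0,\infty)}\frac{d\rho(\kappa)}{t+\kappa^2}. \]
Here $\beta\ge0$, and each $\tfrac{1}{t+\kappa^2}$ is the reciprocal of the strictly positive operator monotone function $t\mapsto t+\kappa^2$, hence (as $-1/(t+\kappa^2)$ is a composition of operator monotone functions) non‑negative and operator decreasing on $(0,\infty)$; a positive superposition of such functions is again non‑negative and operator decreasing, which is the assertion. Writing $\lambda=-\kappa^2\le0$, one recognizes this as exactly the representation \eqref{eq-soc} of Theorem \ref{1}(i), so in fact $g\in\textbf{SOC}(0,\infty)$, in harmony with the framework of the paper. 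The decisive step is the representation of the third paragraph; the rest is bookkeeping.
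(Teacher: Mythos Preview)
The paper does not prove this statement; Theorem~\ref{2} is quoted from Audenaert \cite{17} and used as input, so there is no in-paper proof to compare against. I therefore assess your argument on its own.

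Your treatment of non-negativity is correct, and the reduction of operator decrease of $g(t)=f(\sqrt t)/\sqrt t$ to the matrix inequality $K_f\succeq L_f$ via the identity you display is also correct (and, incidentally, this is essentially how Audenaert sets things up). The genuine gap is in your third paragraph. The representation
\[
f(t)=\beta t+\int_{[0,\infty)}\frac{t}{t^{2}+\kappa^{2}}\,d\rho(\kappa),\qquad \beta\ge 0,\ \rho\ge 0,
\]
after the substitution $t\mapsto\sqrt t$ and division by $\sqrt t$, becomes precisely the representation \eqref{eq-soc} of a non-negative operator decreasing function on $(0,\infty)$. Hence ``every Kwong function admits this representation'' is logically equivalent to the theorem you are asked to prove; you have reformulated the target, not reduced it. The two routes you offer toward the representation are only programs: the Choquet/Krein--Milman route requires identifying the extreme rays of the Kwong cone and proving there are no others, which you do not do; the positive-real route presupposes that a Kwong function extends holomorphically to the open right half-plane with non-negative real part, a nontrivial analytic fact that you assert without justification and that is itself tantamount to the desired conclusion. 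Your second-paragraph reduction to $K_f\succeq L_f$ is promising and is where the real argument should continue (Audenaert closes the gap by embedding both $K_f$ and $L_f$ into a single $2n\times 2n$ Kwong-type positive matrix and reading off $K_f\succeq L_f$ from a suitable block/limit argument), but as written your proposal stops exactly at the point where the content lies.
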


This paper is arranged as follows.  In section \ref{sec2},  Theorem \ref{5new} compares $g(A+B)$ with  $g(A)+g(B)$ for the strongly operator convex functions on $(-\infty,0)$. This leads to the subadditivity inequality of positive operator monotone functions on $(-\infty,0)$.  In comparison with Theorem \ref{3}, Theorem \ref{16} gives the subadditivity property of positive operator monotone functions on $(0, \infty)$ with sharp constant. Theorem \ref{9}, Lemma \ref{12} and Corollary \ref{13} establish a  generalization of Theorem \ref{10} involving the strongly operator convex functions on an interval and the weighted operator means.  In Proposition \ref{14}, we investigate relations between strongly operator convex functions and Kwong functions on $(0, \infty)$. Then, we deduce some positivity results for the symmetrized product of positive invertible operators. In section  \ref{sec3}, we deal with  strongly operator convex functions on right half line $(a, \infty)$ with  $a>-\infty$ and also left half line $(-\infty,b)$ with $b< \infty$. In Theorem \ref{20}, we show that any non-constant strongly operator convex function $g$ on $(a, \infty)$ is strictly operator decreasing and estimate lower bounds of $g(B)-g(A)$ when $A-B>0$. Theorem \ref{20} is an eventual extension of Corollary \ref{18}, and also deduce an extension of  Theorem \ref{17} in Corollary \ref{23}. Moreover, in Theorem \ref{21} and its corollaries, we study the behavior of strongly operator convex functions on $(-\infty,b)$. Consequently in Corollary \ref{24}, we estimate lower bounds of $f(A)-f(B)$  for any non-constant positive operator monotone functions $f$ on $(-\infty,b)$ whenever $A-B>0$.

\section{Strongly operator convex functions}\label{sec2}

By the operator decreasing property of $ g\in$ {\rm{\textbf{SOC}}}$(0, \infty)$, we have
\begin{prop}\label{25}
If $0\neq g\in$ {\rm{\textbf{SOC}}}$(0, \infty)$ and $A, B$ are positive invertible operators, then
\begin{equation*}
g(A+B)\leq g(A)+g(B).
\end{equation*}
\end{prop}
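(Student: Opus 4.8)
The plan is to exploit the integral representation of strongly operator convex functions on $(0,\infty)$ provided by Theorem \ref{1}(i), since the half-line $(0,\infty)$ is the case $a=0$. Writing $g\in\textbf{SOC}(0,\infty)$ in the form
\begin{equation*}
g(t)=g(\infty)+\int\limits_{(-\infty,0]}\frac{1}{t-\lambda}\,d\mu(\lambda),
\end{equation*}
with $\mu\geq 0$ supported on $(-\infty,0]$ and $g(\infty)\geq 0$, I would reduce the whole problem to an integral of the elementary building blocks $t\mapsto (t-\lambda)^{-1}$ for fixed $\lambda\le 0$, together with the constant $g(\infty)$. For positive invertible $A,B$, the spectra of $A$, $B$, and $A+B$ all lie in $(0,\infty)$, so each $g(A)$, $g(B)$, $g(A+B)$ is obtained by applying the functional calculus under the integral sign, and the desired inequality $g(A+B)\le g(A)+g(B)$ will follow if I can establish it for every single building block and for the constant term.

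First I would dispose of the constant: since $g(\infty)\geq 0$, we have $g(\infty)\le g(\infty)+g(\infty)=2g(\infty)$, which is the required subadditivity for the constant piece. The core step is then the operator inequality
\begin{equation*}
(A+B-\lambda)^{-1}\le (A-\lambda)^{-1}+(B-\lambda)^{-1}
\end{equation*}
for each fixed $\lambda\le 0$, where all three operators are strictly positive (note $A-\lambda=A+|\lambda|>0$ and similarly for $B$ and $A+B$, since $\lambda\le 0$). This is where the main work lies, and I expect it to be the chief obstacle: it is a genuinely operator-theoretic statement, not a scalar one, because $A$ and $B$ need not commute. The natural route is to observe that $h_\lambda(t)=(t-\lambda)^{-1}=\frac{1}{t+|\lambda|}$ is a non-negative operator monotone function on $(0,\infty)$ for $\lambda\le 0$, so one can try to invoke Theorem \ref{3}; however that theorem carries the extra hypothesis $AB+BA\ge 0$, which we do not have here, so I would instead seek a direct proof of the single-resolvent subadditivity that avoids any symmetrized-product condition.

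To prove the resolvent inequality directly, I would set $X=A+|\lambda|$ and $Y=B+|\lambda|$, so that $X,Y>0$ and the claim becomes $(X+Y-|\lambda|)^{-1}\le X^{-1}+Y^{-1}$. Since $|\lambda|\ge 0$ gives $X+Y-|\lambda|\ge X+Y-|\lambda|$ and in fact $X+Y-|\lambda|= A+B+|\lambda|\ge X$ and $\ge Y$ (because $A,B>0$), antitonicity of inversion on positive operators yields $(X+Y-|\lambda|)^{-1}\le X^{-1}$ and also $\le Y^{-1}$; but a single one of these already gives $(A+B-\lambda)^{-1}\le (A-\lambda)^{-1}\le (A-\lambda)^{-1}+(B-\lambda)^{-1}$, using that $(B-\lambda)^{-1}>0$. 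This is the crux and it is clean: inversion is operator decreasing on strictly positive operators, so from $A+B+|\lambda|\ge A+|\lambda|>0$ we immediately obtain the needed bound on each building block. Finally I would integrate this pointwise-in-$\lambda$ operator inequality against the positive measure $d\mu(\lambda)$ and add the constant estimate, obtaining
\begin{equation*}
g(A+B)=g(\infty)+\int\limits_{(-\infty,0]}(A+B-\lambda)^{-1}d\mu(\lambda)\le g(A)+g(B),
\end{equation*}
where the interchange of the functional calculus with the integral is justified exactly as in the proof of Theorem \ref{1}. The only point requiring care is ensuring the integrals converge and commute with the operator ordering, which is standard given the growth condition $\int (1+|\lambda|)^{-1}d\mu<\infty$.
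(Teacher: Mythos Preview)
Your argument is correct, but it takes an unnecessary detour through the integral representation. The paper's proof is a one-liner: by Theorem~\ref{1}(i), any nonzero $g\in\textbf{SOC}(0,\infty)$ is positive and operator decreasing on $(0,\infty)$, so from $A+B\ge A$ (since $B>0$) one gets $g(A+B)\le g(A)\le g(A)+g(B)$, the last step using $g(B)\ge 0$. Your core step is exactly this monotonicity argument, but applied only to each resolvent $t\mapsto(t-\lambda)^{-1}$ after decomposing $g$; since Theorem~\ref{1}(i) already gives the operator decreasing property for $g$ itself, the decomposition and re-integration are superfluous. The two approaches are logically the same---the integral representation is precisely how one proves the characterisation in Theorem~\ref{1}(i)---but invoking that theorem directly is cleaner and avoids the bookkeeping about convergence and interchanging integrals with the functional calculus.
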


\begin{thm}\emph{\cite[Theorem 2.1]{Uchiyama1}}\label{Uchi}
Let $g$ be an operator convex function on $[0,\infty)$ with $g(0)\leq 0$. Then $AB+BA\geq 0$  if and only if 
\begin{equation*}
g(A+B)\geq  g(A)+g(B),
\end{equation*}
for all positive operators $A,B$ 
\end{thm}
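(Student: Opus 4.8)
The plan is to reduce everything to the elementary building blocks in the integral representation of operator convex functions on $[0,\infty)$, and then to lean on the subadditivity criterion already recorded in Theorem \ref{3}. Recall that an operator convex $g$ on $[0,\infty)$ can be written as
\[
g(t)=g(0)+\beta t+\gamma t^{2}+\int_{(0,\infty)}\frac{t^{2}}{t+\lambda}\,d\mu(\lambda),
\]
with $\beta\in\mathbb{R}$, $\gamma\ge 0$ and $\mu$ a positive measure. Setting $\tilde g=g-g(0)$ (so $\tilde g(0)=0$) one has $g(A+B)-g(A)-g(B)=\big[\tilde g(A+B)-\tilde g(A)-\tilde g(B)\big]-g(0)I$, and since $-g(0)\ge 0$ by hypothesis the constant only contributes a non-negative slack; thus the whole question is governed by the non-constant part $\tilde g$.

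For the implication $AB+BA\ge 0\Rightarrow g(A+B)\ge g(A)+g(B)$ I would handle each term of $\tilde g$ separately. The linear term cancels, the quadratic term contributes exactly $\gamma(AB+BA)\ge 0$, and the crucial point is the rewriting $\frac{t^{2}}{t+\lambda}=t-\frac{\lambda t}{t+\lambda}$, which turns the resolvent block into $\phi_\lambda(A)+\phi_\lambda(B)-\phi_\lambda(A+B)$ with $\phi_\lambda(t)=\frac{\lambda t}{t+\lambda}$. Since $\phi_\lambda$ is non-negative and operator monotone on $[0,\infty)$, Theorem \ref{3} applies to it and the hypothesis $AB+BA\ge 0$ yields $\phi_\lambda(A+B)\le\phi_\lambda(A)+\phi_\lambda(B)$, so every block contributes a positive operator. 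Integrating against $\mu$ and restoring the slack $-g(0)I$ gives superadditivity.

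For the converse I would use a second-order scaling argument at the origin, under the normalisation $g(0)=0$ explained below. Replacing $A,B$ by $sA,sB$ and setting $\psi(s)=g(s(A+B))-g(sA)-g(sB)$, superadditivity for the positive operators $sA,sB$ gives $\psi(s)\ge 0$; since $g(0)=0$ the linear part cancels in $s^{-2}\psi(s)$, and as $s\to 0^{+}$ this quotient converges to $C\,(AB+BA)$ with $C=\gamma+\int_{(0,\infty)}\lambda^{-1}\,d\mu(\lambda)=\tfrac12 g''(0^{+})$. For non-affine $g$ one has $C>0$, so passing to the limit in $\langle s^{-2}\psi(s)x,x\rangle\ge 0$ forces $\langle(AB+BA)x,x\rangle\ge 0$ for every $x$, that is $AB+BA\ge 0$.

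I expect the necessity direction to be the main obstacle. First, it genuinely needs the non-constant part to survive at second order, so $g$ must be non-affine and the hypothesis is effectively $g(0)=0$: when $g(0)<0$ the slack $-g(0)I$ can mask a small indefinite symmetrized product (for instance $g(t)=t^{2}+c$ with $c<0$), so the converse should be read with this normalisation. Second, interchanging the limit with the integral requires care when $\mu$ charges a neighbourhood of $0$; I would justify it by Fatou's lemma, noting that $s^{-2}\psi(s)\ge 0$ already forces the limiting form to be non-negative even in the degenerate case $g''(0^{+})=+\infty$. By contrast the sufficiency direction is essentially immediate once the blocks $\phi_\lambda$ are identified and Theorem \ref{3} is invoked.
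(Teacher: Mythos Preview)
The paper does not supply its own proof of this statement: Theorem~\ref{Uchi} is quoted from \cite[Theorem 2.1]{Uchiyama1} and used as a black box (in the proof of Theorem~\ref{16}), so there is nothing in the present paper to compare your argument against.

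That said, your sufficiency argument is sound. Decomposing $g$ via the integral representation, disposing of the linear and quadratic parts by inspection, rewriting $\dfrac{t^{2}}{t+\lambda}=t-\phi_\lambda(t)$ with $\phi_\lambda(t)=\dfrac{\lambda t}{t+\lambda}$, and then invoking Theorem~\ref{3} for the non-negative operator monotone functions $\phi_\lambda$ is exactly the right reduction; the constant slack $-g(0)I\ge 0$ is handled correctly.

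Your diagnosis of the converse is also accurate and worth stressing. As literally stated here (for a \emph{fixed} $g$ with $g(0)\le 0$), the ``only if'' direction cannot hold: your example $g(t)=t^{2}+c$ with $c<0$ gives $g(A+B)-g(A)-g(B)=AB+BA-cI$, which is $\ge 0$ whenever the negative part of $AB+BA$ is dominated by $|c|$, and any affine $g$ makes the inequality hold for all $A,B$. Your scaling argument correctly recovers the converse under the natural normalisation $g(0)=0$ with $g$ non-affine; alternatively, if the statement is read as ranging over \emph{all} admissible $g$ simultaneously, the converse becomes immediate by choosing $g(t)=t^{2}$. Either reading is consistent with how the result is applied in Theorem~\ref{16}, where the particular $g$ satisfies $g(0)=0$.
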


Notice that the subadditivity property of operator convex functions does not hold in general. Proposition \ref{25} and \cite[Theorem 2.4]{Uchiyama} give the subadditivity of operator convex functions as follows.
  
\begin{cor}
Let $g$ be a positive operator convex function on $(0,\infty)$ with $\lim\limits_{t\to\infty}g(t)< \infty$ and $A, B$ be positive invertible operators. Then $g(A+B)\leq g(A)+g(B)$.
\end{cor}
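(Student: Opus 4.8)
The plan is to deduce the corollary by combining Proposition~\ref{25} with \cite[Theorem 2.4]{Uchiyama}. The key observation is that the hypotheses force us into a situation where $g$, suitably normalized, is strongly operator convex on $(0,\infty)$. Specifically, let $g$ be a positive operator convex function on $(0,\infty)$ with $\lim_{t\to\infty}g(t)<\infty$. Since $g$ is operator convex and bounded above (its limit at $\infty$ is finite), the cited result \cite[Theorem 2.4]{Uchiyama} should provide the integral representation characterizing exactly this class, from which one reads off that $g$ is in fact operator decreasing and strictly positive on $(0,\infty)$. First I would invoke that characterization to conclude $g \in \textbf{SOC}(0,\infty)$ via Theorem~\ref{1}(i), since the combination of $g(t)>0$ together with operator decreasingness is precisely the criterion for membership in $\textbf{SOC}(0,\infty)$.

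Once $g$ is identified as an element of $\textbf{SOC}(0,\infty)$, the conclusion is immediate: Proposition~\ref{25} applies verbatim to any nonzero $g\in\textbf{SOC}(0,\infty)$ and positive invertible operators $A,B$, yielding
\begin{equation*}
g(A+B)\leq g(A)+g(B).
\end{equation*}
The only edge case to dispose of is $g\equiv 0$, which is excluded by positivity, or a constant positive $g$, for which the subadditivity is trivial (a positive constant $c$ satisfies $c\leq 2c$). So the substantive content reduces entirely to establishing the bridge from the stated analytic hypotheses to strong operator convexity.

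The main obstacle I anticipate is verifying that a positive operator convex function on $(0,\infty)$ with finite limit at infinity is necessarily operator decreasing. This is the non-formal step: operator convexity alone does not force monotonicity, so the finiteness of $\lim_{t\to\infty}g(t)$ must be doing real work. Intuitively, an operator convex function that does not blow up at infinity cannot be eventually increasing in the operator sense, and convexity then propagates this backward to force global operator decrease; making this rigorous is exactly what \cite[Theorem 2.4]{Uchiyama} is being cited to supply. I would therefore lean on that reference rather than re-derive the monotonicity from scratch, treating it as the load-bearing input and keeping the remainder of the argument a short syllogism through Theorem~\ref{1} and Proposition~\ref{25}.
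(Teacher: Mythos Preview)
Your proposal is correct and follows essentially the same route as the paper: the paper's entire argument is the sentence ``Proposition~\ref{25} and \cite[Theorem~2.4]{Uchiyama} give the subadditivity of operator convex functions as follows,'' and you have correctly unpacked this into the two-step syllogism (use the cited Uchiyama result to pass from the hypotheses to $g\in\textbf{SOC}(0,\infty)$, then invoke Proposition~\ref{25}). Your insertion of Theorem~\ref{1}(i) as the explicit link between ``positive and operator decreasing'' and strong operator convexity is a reasonable way to make the bridge precise, though the paper leaves this implicit.
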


If $0\neq g\in$ {\rm{\textbf{SOC}}}$(-\infty,0)$, then $g$ is a positive operator convex function on $(-\infty,0)$ \cite[Corollary 2.7]{Uchiyama}. Thus, we have
\begin{equation*}
g( \frac{A+B}{2})\leq g(A)+g(B),
\end{equation*}
for all negative invertible operators $A$ and $B$. Now, we may ask is it possible to compare $g(A+B)$ with $ g(A)+g(B)$ for a strongly operator convex function $g$ on $(-\infty,0)$? We give an affirmative answer as follows.

\begin{thm}\label{5new}
Let $0\neq g\in$ {\rm{\textbf{SOC}}}$(-\infty,0)$ and $A, B$ be negative invertible operators. Then
\begin{equation*}
g(A+B)\leq g(A)+g(B),
\end{equation*}
 if and only if $ A(\lambda-B)^{-1}A+B(\lambda-A)^{-1}B\geq 0 $ for $\lambda\in [0,\infty)$.
\end{thm}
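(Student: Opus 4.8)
The plan is to pass from operators to a spectral parameter via the integral representation of Theorem \ref{1}(ii), reduce the subadditivity inequality to a pointwise positivity statement, and identify that statement with the symmetrized‑product condition through an explicit congruence.

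First, since $0\neq g\in$ {\rm{\textbf{SOC}}}$(-\infty,0)$ is non-constant, Theorem \ref{1}(ii) (with $b=0$) yields $g(t)=g(-\infty)+\int_{[0,\infty)}(\lambda-t)^{-1}\,d\mu(\lambda)$ for a positive measure $\mu$ on $[0,\infty)$, and $g(-\infty)\geq 0$ because $g>0$. As $A,B$ are negative invertible, so is $A+B$, and for every $\lambda\in[0,\infty)$ the operators $\lambda-A$, $\lambda-B$, and $R:=\lambda-A-B$ are positive and invertible (even at $\lambda=0$, since $-(A+B)>0$). Applying the functional calculus termwise and integrating, I would obtain
\[
g(A)+g(B)-g(A+B)=g(-\infty)I+\int_{[0,\infty)}\Delta(\lambda)\,d\mu(\lambda),
\]
where $\Delta(\lambda)=(\lambda-A)^{-1}+(\lambda-B)^{-1}-(\lambda-A-B)^{-1}$. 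Since $g(-\infty)I\geq0$, the inequality will follow once I control the sign of each $\Delta(\lambda)$.

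The heart of the argument is an algebraic identity. Writing $R=(\lambda-A)-B=(\lambda-B)-A$, I would expand $R(\lambda-A)^{-1}R$ and $R(\lambda-B)^{-1}R$ and use $(\lambda-A)+(\lambda-B)-R=\lambda I$ to arrive at
\[
R\,\Delta(\lambda)\,R=\lambda I-2A-2B+A(\lambda-B)^{-1}A+B(\lambda-A)^{-1}B.
\]
Because $R$ is positive invertible, $\Delta(\lambda)\geq0$ is equivalent to $R\Delta(\lambda)R\geq0$; and as $\lambda I-2A-2B\geq0$ (here $\lambda\geq0$ and $A,B<0$), the hypothesis $A(\lambda-B)^{-1}A+B(\lambda-A)^{-1}B\geq0$ forces $\Delta(\lambda)\geq0$ for every $\lambda$. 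Integrating against $\mu\geq0$ and adding $g(-\infty)I$ then gives $g(A+B)\leq g(A)+g(B)$; this is the substantive ``if'' direction.

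For the converse I would observe that the symmetrized‑product condition in fact holds automatically: for each $\lambda\in[0,\infty)$ one has $(\lambda-B)^{-1}\geq0$, so $A(\lambda-B)^{-1}A=\big((\lambda-B)^{-1/2}A\big)^{*}\big((\lambda-B)^{-1/2}A\big)\geq0$ (using $A=A^{*}$), and likewise $B(\lambda-A)^{-1}B\geq0$; hence their sum is positive and the reverse implication is immediate. I expect the main obstacle to be the clean derivation of the congruence identity for $R\Delta(\lambda)R$ with non-commuting $A$ and $B$, together with the bookkeeping of the constant term $g(-\infty)$ and the verification that $\lambda-A$, $\lambda-B$, and $\lambda-A-B$ remain positive and invertible throughout $\lambda\in[0,\infty)$ (including the endpoint $\lambda=0$).
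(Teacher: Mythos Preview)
Your ``if'' direction (condition $\Rightarrow$ subadditivity) matches the paper's converse step: both use the representation of Theorem~\ref{1}(ii), reduce to showing $\Delta(\lambda)=(\lambda-A)^{-1}+(\lambda-B)^{-1}-(\lambda-A-B)^{-1}\geq0$, and verify this via the congruence identity $R\Delta(\lambda)R=\lambda I-2(A+B)+A(\lambda-B)^{-1}A+B(\lambda-A)^{-1}B$ with $R=\lambda-A-B$ (this is exactly the paper's \eqref{eq2}). Your ``only if'' direction, however, is genuinely different and simpler. The paper specializes to $g_\lambda(t)=(\lambda-t)^{-1}$ and works backward from subadditivity of each $g_\lambda$ to the condition, which tacitly assumes subadditivity for \emph{all} $g\in{\rm{\textbf{SOC}}}(-\infty,0)$ rather than the single $g$ fixed in the statement. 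You instead note that each summand $A(\lambda-B)^{-1}A$ and $B(\lambda-A)^{-1}B$ is automatically $\geq0$ (being of the form $X^*PX$ with $P=(\lambda-B)^{-1}\geq0$, and $A=A^*$), so the right-hand condition holds unconditionally for negative invertible $A,B$ and $\lambda\geq0$. This sidesteps the quantifier issue and, combined with the observation $g(A+B)\leq g(A)\leq g(A)+g(B)$ (from operator monotonicity of $g$ together with $g>0$), reveals that both sides of the biconditional are in fact always true---a point the paper does not make explicit.
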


\begin{proof}
Assume that $g(A+B)\leq g(A)+g(B)$ for all $g\in\textbf{SOC}(-\infty,0)$. Let $g_{\lambda}(t)=\frac{1}{\lambda-t}$ for  $\lambda\in  [0,\infty)$. Then $g_{\lambda}(t)$ is a positive operator monotone function on $(-\infty,0)$ and so $g_{\lambda}(t)\in \textbf{SOC}(-\infty,0)$. Therefore, we have
\begin{equation*}
g_{\lambda}(A+B)\leq g_{\lambda}(A)+g_{\lambda}(B),
\end{equation*}
which gives
\begin{equation}\label{eq1}
(\lambda -A-B)^{-1}\leq  (\lambda -A)^{-1}+(\lambda -B)^{-1},
\end{equation}
and consequently
\begin{equation*}
(\lambda -A-B)\leq  (\lambda -A-B) (\lambda -A)^{-1} (\lambda -A-B)+(\lambda -A-B) (\lambda -B)^{-1} (\lambda -A-B).
\end{equation*}
We find that 
\begin{align*}
A(\lambda -A)^{-1}=(\lambda -A)^{-1} A=-I+\lambda (\lambda -A)^{-1},\\
B(\lambda -B)^{-1}=(\lambda -B)^{-1} B=-I+\lambda (\lambda -B)^{-1}.
\end{align*}
Compute
\begin{align*}
&(\lambda -A-B) (\lambda -A)^{-1} (\lambda -A-B)\\
&= \lambda^{2}(\lambda -A)^{-1}+\lambda-\lambda^{2}(\lambda -A)^{-1}-\lambda(\lambda -A)^{-1}B+\lambda-\lambda^{2}(\lambda -A)^{-1}-A-\lambda\\
&\,\, +\lambda^{2}(\lambda -A)^{-1}-B+\lambda(\lambda -A)^{-1}B -\lambda B(\lambda -A)^{-1}-B+\lambda B(\lambda -A)^{-1}+B(\lambda -A)^{-1}B,
\end{align*}
and 
\begin{align*}
&(\lambda -A-B) (\lambda -B)^{-1} (\lambda -A-B)\\
&= \lambda^{2}(\lambda -B)^{-1}-\lambda(\lambda -B)^{-1}A+\lambda-\lambda^{2}(\lambda -B)^{-1}-\lambda A(\lambda -B)^{-1}+A(\lambda -B)^{-1}A-A     \\
&\quad +\lambda A(\lambda -B)^{-1}+\lambda-\lambda^{2}(\lambda -B)^{-1}-A+\lambda(\lambda -B)^{-1}A-B-\lambda+\lambda^{2}(\lambda -B)^{-1}.
\end{align*}
Thus, we obtain
\begin{equation*}
(\lambda -A-B)\leq \left\lbrace  \lambda -A-2B+B(\lambda -A)^{-1}B\right\rbrace  +\left\lbrace \lambda -2A-B+A(\lambda -B)^{-1}A\right\rbrace,
\end{equation*}
which gives
\begin{equation}\label{eq2}
A(\lambda -B)^{-1}A+B(\lambda -A)^{-1}B+\lambda -2(A+B)\geq 0,
\end{equation}
for every $\lambda\in  [0,\infty)$ and $A,B<0$. Therefore, we have
\begin{equation*}\label{eq3}
A(\lambda -B)^{-1}A+B(\lambda -A)^{-1}B\geq 0.
\end{equation*}
Conversely, suppose that $A(\lambda -B)^{-1}A+B(\lambda -A)^{-1}B\geq 0$ for $\lambda\in  [0,\infty)$ and $A,B<0$. Thus, we have inequality \eqref{eq2} which is equivalent to \eqref{eq1} . Hence, we get
\begin{equation}\label{eq4}
g_{\lambda}(A+B)\leq g_{\lambda}(A)+g_{\lambda}(B),
\end{equation}
where $g_{\lambda}(t)=\frac{1}{\lambda-t}$. If $g\in\textbf{SOC}(-\infty,0)$, then by \eqref{eq-soc1} we have
\begin{equation*}
g(t)=\alpha + \int\limits_{[0,\infty)} g_{\lambda}(t) d \mu (\lambda),
\end{equation*}
where $\alpha =g(\infty)$. Notice that $\alpha> 0$. Thus, without loss of generality, we can assume
\begin{equation*}
g(t)= \int\limits_{[0,\infty)} g_{\lambda}(t) d \mu (\lambda).
\end{equation*}
It follows from inequality \eqref{eq4} that the desired inequality $g(A+B)\leq g(A)+g(B)$ holds.
\end{proof}

If $\lambda \to 0^{+}$ in \eqref{eq2}, then we have
\begin{cor}
Let $0\neq g\in$ {\rm{\textbf{SOC}}}$(-\infty,0)$ and $A, B$ be negative invertible operators. If $g(A+B)\leq g(A)+g(B)$, then $ AB^{-1}A+B A^{-1}B\leq 0$.
\end{cor}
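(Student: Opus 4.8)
The plan is to read off the conclusion as the endpoint case $\lambda=0$ of the family of inequalities produced in the proof of Theorem \ref{5new}. By that theorem the hypothesis $g(A+B)\le g(A)+g(B)$ is equivalent to the resolvent inequality $A(\lambda-B)^{-1}A+B(\lambda-A)^{-1}B\ge 0$ holding for every $\lambda\in[0,\infty)$ (the reduced inequality recorded just after \eqref{eq2}), so this entire family is at our disposal. The idea is simply to let $\lambda\downarrow 0$ in it and identify the limiting operator.

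First I would check that the limit is legitimate. Since $A,B<0$, the operators $-A$ and $-B$ are strictly positive and invertible, so $\lambda-A=\lambda+(-A)$ and $\lambda-B=\lambda+(-B)$ remain invertible for all $\lambda\ge 0$, and on the open set of invertible operators the map $X\mapsto X^{-1}$ is norm-continuous. Consequently $(\lambda-B)^{-1}\to(-B)^{-1}=-B^{-1}$ and $(\lambda-A)^{-1}\to -A^{-1}$ in norm as $\lambda\to 0^{+}$, whence $A(\lambda-B)^{-1}A\to -AB^{-1}A$ and $B(\lambda-A)^{-1}B\to -BA^{-1}B$. Because the cone of positive operators is norm-closed, passing to the limit in $A(\lambda-B)^{-1}A+B(\lambda-A)^{-1}B\ge 0$ yields $-AB^{-1}A-BA^{-1}B\ge 0$, that is, $AB^{-1}A+BA^{-1}B\le 0$, which is exactly the assertion.

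The one point that needs care, and the only place the argument can slip, is the choice of inequality to which the limit is applied. Letting $\lambda\to 0^{+}$ directly in \eqref{eq2} leaves the surviving term $\lambda-2(A+B)$, whose limit $-2(A+B)$ is strictly positive because $A+B<0$; this produces only the weaker bound $AB^{-1}A+BA^{-1}B\le -2(A+B)$. To land exactly on $\le 0$ one must pass to the limit in the reduced inequality $A(\lambda-B)^{-1}A+B(\lambda-A)^{-1}B\ge 0$ rather than in \eqref{eq2} itself. As a consistency check, the same conclusion can be read off directly: $B^{-1}<0$ and $A^{-1}<0$ give $AB^{-1}A=A^{*}B^{-1}A\le 0$ and $BA^{-1}B=B^{*}A^{-1}B\le 0$, whose sum is $\le 0$.
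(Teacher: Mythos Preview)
Your argument is correct and follows the same idea as the paper's one-line justification, namely sending $\lambda\to 0^{+}$. You are in fact more careful than the paper here: the paper literally points to \eqref{eq2}, but as you observe, passing to the limit there leaves the positive residue $-2(A+B)$ and gives only $AB^{-1}A+BA^{-1}B\le -2(A+B)$. Taking the limit instead in the reduced inequality $A(\lambda-B)^{-1}A+B(\lambda-A)^{-1}B\ge 0$ (equivalently, simply evaluating the condition of Theorem~\ref{5new} at $\lambda=0$, which lies in the stated range $[0,\infty)$) is what actually yields the claimed bound. Your closing remark, that $AB^{-1}A=A^{*}B^{-1}A\le 0$ and $BA^{-1}B\le 0$ hold for any negative invertible $A,B$ regardless of the hypothesis on $g$, is a valid independent verification and shows the corollary is in fact unconditional.
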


\begin{cor}
Let $f$ be a positive operator monotone function on $(-\infty,0)$ and $A, B$ be negative invertible operators. Then
\begin{equation*}
 f(A+B)\leq f(A)+f(B),
\end{equation*}
if and only if $A(\lambda -B)^{-1}A+B(\lambda -A)^{-1}B\geq 0$ for $\lambda\in  [0,\infty)$.
\end{cor}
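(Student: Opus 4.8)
The plan is to recognize that this corollary is an immediate specialization of Theorem~\ref{5new}, obtained by identifying the class of positive operator monotone functions on $(-\infty,0)$ with the class $\textbf{SOC}(-\infty,0)$. The tool that effects this identification is the Brown--Uchiyama characterization stated in Theorem~\ref{1}(ii) with $b=0$: a non-constant continuous $g$ lies in $\textbf{SOC}(-\infty,0)$ precisely when $g(t)>0$ and $g$ is operator monotone on $(-\infty,0)$. Thus the functions appearing in the present corollary are exactly the nonzero, non-constant members of $\textbf{SOC}(-\infty,0)$, and there is essentially nothing to prove beyond quoting the earlier theorem.

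Carrying this out, I would first restrict to non-constant $f$, since for a constant $f\equiv c>0$ the inequality $f(A+B)\leq f(A)+f(B)$ reduces to $c\leq 2c$ and holds automatically, so that the asserted equivalence is meaningful only in the non-constant case (this is also the setting of Theorem~\ref{1}). For non-constant $f$, the key step is to apply Theorem~\ref{1}(ii): since $f$ is positive and operator monotone on $(-\infty,0)$, it belongs to $\textbf{SOC}(-\infty,0)$, and positivity gives $f\neq 0$. Hence $f$ meets the hypotheses of Theorem~\ref{5new}, and taking $g=f$ there yields directly that $f(A+B)\leq f(A)+f(B)$ holds if and only if $A(\lambda-B)^{-1}A+B(\lambda-A)^{-1}B\geq 0$ for every $\lambda\in[0,\infty)$, which is the claim.

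I do not expect a genuine obstacle, because the substantive work --- the algebraic expansion of $(\lambda-A-B)(\lambda-A)^{-1}(\lambda-A-B)$ together with the passage from the resolvent inequality \eqref{eq1} to the positivity condition \eqref{eq2}, and the integral-representation reduction via \eqref{eq-soc1} --- has already been performed in the proof of Theorem~\ref{5new} and transfers verbatim once $f$ is known to be strongly operator convex. The only point demanding a little care is the verification that Theorem~\ref{1}(ii) is applicable, namely that $f$ is non-constant and hence nonzero; this is exactly what licenses replacing the abstract $g\in\textbf{SOC}(-\infty,0)$ by the concrete positive operator monotone $f$.
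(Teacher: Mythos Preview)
Your proposal is correct and matches the paper's approach: the corollary is stated without proof immediately after Theorem~\ref{5new}, and the intended argument is precisely the identification you make via Theorem~\ref{1}(ii), namely that the positive operator monotone functions on $(-\infty,0)$ coincide with the nonzero members of $\textbf{SOC}(-\infty,0)$, so Theorem~\ref{5new} applies verbatim with $g=f$. One small remark on your treatment of the constant case: to complete the equivalence there you should also note that for negative invertible $A,B$ and $\lambda\ge 0$ the operators $(\lambda-A)^{-1},(\lambda-B)^{-1}$ are positive, whence $A(\lambda-B)^{-1}A+B(\lambda-A)^{-1}B\ge 0$ holds automatically as well.
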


Let $A, B$ be positive invertible operators and $-1\leq p <0$. According to the operator convexity of the function $t^{p}$ on $(0, \infty)$, it is known that
\begin{equation}\label{eq10}
(A+B)^{p}\leq 2^{p-1} (A^{p}+B^{p}).
\end{equation}

\begin{thm}\label{16}
Let  $g$ be a positive operator convex function on $(0, \infty)$  with $g(0^{+})=\lim\limits_{t\to0^+}g(t)=0$.  Put $f(t)=\frac{1}{g(t)}$ on $(0, \infty)$. Then $AB+BA\geq 0$  if and only if 
\begin{equation*}
f(A+B)\leq \frac{1}{4} (f(A)+f(B)),
\end{equation*}
for all positive invertible operators $A,B$.
\end{thm}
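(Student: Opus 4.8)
The plan is to reduce the statement to Uchiyama's superadditivity criterion (Theorem \ref{Uchi}) by rewriting the target inequality in terms of the operator harmonic mean. Since $g$ is continuous and positive on $(0,\infty)$ with $g(0^{+})=0$, I would first extend $g$ to $[0,\infty)$ by setting $g(0)=0$; operator convexity persists on the closed interval by continuity, and $g(0)=0\leq 0$, so Theorem \ref{Uchi} applies to $g$. Throughout I would write $X=g(A)$, $Y=g(B)$ and $Z=g(A+B)$, which are positive invertible because $g>0$ on $(0,\infty)$ and $A,B,A+B>0$.

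The core reformulation is that, because $f=1/g$, the functional calculus gives $f(A)=g(A)^{-1}=X^{-1}$ and likewise for $B$ and $A+B$, so the desired inequality reads $Z^{-1}\leq\tfrac14(X^{-1}+Y^{-1})$. Using $X\,!\,Y=2(X^{-1}+Y^{-1})^{-1}$, hence $(X\,!\,Y)^{-1}=\tfrac12(X^{-1}+Y^{-1})$, the right-hand side equals $\tfrac12(X\,!\,Y)^{-1}$. Taking inverses (which reverses order on positive invertible operators) turns the target into the equivalent statement
\begin{equation*}
f(A+B)\leq\tfrac14\bigl(f(A)+f(B)\bigr)\iff g(A+B)\geq 2\,\bigl(g(A)\,!\,g(B)\bigr).
\end{equation*}
For the forward implication I would then assume $AB+BA\geq 0$: Theorem \ref{Uchi} yields the superadditivity $g(A+B)\geq g(A)+g(B)=2\,(X\nabla Y)$, and since the operator harmonic mean never exceeds the arithmetic mean, $X\,!\,Y\leq X\nabla Y$, we obtain $g(A+B)\geq 2\,(X\nabla Y)\geq 2\,(X\,!\,Y)$, which by the reformulation above is exactly $f(A+B)\leq\tfrac14(f(A)+f(B))$.

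The converse is where the real difficulty lies, and it is the step I would scrutinize most carefully. By the reformulation and Theorem \ref{Uchi}, it suffices to pass from the harmonic-mean bound $g(A+B)\geq 2\,(g(A)\,!\,g(B))$ back to the arithmetic-mean (superadditivity) bound $g(A+B)\geq g(A)+g(B)$. The arithmetic--harmonic inequality runs the \emph{wrong} way here, so this step cannot be extracted from mean inequalities alone and must use the specific structure of $g$; controlling the gap between $2\,(g(A)\,!\,g(B))$ and $g(A)+g(B)$ is the main obstacle. My plan would be to invoke the integral representation of an operator convex function vanishing at the origin, whose generators are, up to an affine and a quadratic term, of the form $t^{2}/(t+s)$ with $s>0$, and to try to verify the reverse implication on these generators, where the symmetrized product should surface directly from computing $g(A+B)-g(A)-g(B)$; alternatively I would argue by contraposition, aiming to show that a defect in the positivity of $AB+BA$ forces a defect already in the harmonic bound. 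Because the harmonic bound is strictly weaker than superadditivity for non-commuting $A,B$, I expect this reverse passage to require either an additional hypothesis on $g$ (beyond operator convexity and $g(0^{+})=0$) or a more delicate operator-mean argument, and I would regard closing this gap as the crux of the proof.
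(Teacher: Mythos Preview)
Your forward implication is exactly the paper's argument, just phrased through the harmonic mean: extend $g$ to $[0,\infty)$, apply Theorem~\ref{Uchi} to get $g(A+B)\ge g(A)+g(B)$, invert, and bound $(g(A)+g(B))^{-1}\le\tfrac14\bigl(g(A)^{-1}+g(B)^{-1}\bigr)$ via the operator AM--HM inequality~\eqref{eq10} with $p=-1$. Nothing to add there.

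Your concern about the converse is well placed, and in fact the paper does \emph{not} close this gap either. The paper's proof simply writes the chain
\[
\frac{1}{g(A+B)}\le\frac{1}{g(A)+g(B)}\le\frac14\Bigl(\frac{1}{g(A)}+\frac{1}{g(B)}\Bigr)
\]
and asserts that the end-to-end inequality is equivalent to $AB+BA\ge0$ ``by Theorem~\ref{Uchi}''. But Theorem~\ref{Uchi} only gives equivalence of $AB+BA\ge0$ with the \emph{first} inequality; the second inequality holds unconditionally, so the composite $f(A+B)\le\tfrac14(f(A)+f(B))$ is a priori strictly weaker. The paper never shows how to recover $g(A+B)\ge g(A)+g(B)$ from the weaker bound, which is precisely the step you flag as the crux.

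Worse, your suspicion that ``an additional hypothesis on $g$'' may be needed is correct: take $g(t)=t$, which is positive and operator convex on $(0,\infty)$ with $g(0^{+})=0$. Then $f(t)=t^{-1}$, and the inequality $f(A+B)\le\tfrac14(f(A)+f(B))$ is exactly $(A+B)^{-1}\le\tfrac14(A^{-1}+B^{-1})$, which holds for \emph{all} positive invertible $A,B$ by~\eqref{eq10}, whereas $AB+BA\ge0$ certainly does not. So the converse implication fails for this $g$, and the theorem as stated is false. Your reformulation $g(A+B)\ge 2\bigl(g(A)\,!\,g(B)\bigr)$ makes the failure transparent: for $g(t)=t$ this is $A+B\ge 2(A\,!\,B)$, i.e.\ AM $\ge$ HM, which carries no information about the symmetrized product. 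Any correct version would need to exclude the affine part of $g$ (e.g.\ assume $g$ is non-affine, or that the measure in its integral representation is nontrivial), and the proof would have to exploit that curvature explicitly; neither the paper's argument nor the plan you sketch supplies this.
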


\begin{proof}
Suppose $g$ is a positive operator convex function on $(0, \infty)$ with $g(0^{+})=0$.  Then $g$ with $g(0)=g(0^{+})=0$ is a positive operator convex function on $[ 0, \infty)$.  
Now, Theorem \ref{Uchi} shows that  $AB+BA\geq 0$  if and only if
\begin{equation}\label{eq9}
\frac{1}{g(A+B)}\leq \frac{1}{g(A)+g(B)}\leq \frac{1}{4} \left(\frac{1}{g(A)}+\frac{1}{g(B)}\right),
\end{equation}
for all positive operators $A,B$. Notice that the second inequality $\frac{1}{g(A)+g(B)}\leq \frac{1}{4} \left(\frac{1}{g(A)}+\frac{1}{g(B)}\right)$ is obtained from \eqref{eq10} with $p=-1$. Putting $f(t)=\frac{1}{g(t)}$ on $(0, \infty)$ in \eqref{eq9}, we find that $AB+BA\geq 0$  if and only if 
\begin{equation*}
f(A+B)\leq\frac{1}{4} (f(A)+f(B)),
\end{equation*}
for all positive invertible operators $A,B$.
\end{proof}

In comparison with Theorem \ref{3}, Theorem \ref{16} entails the following result.
\begin{cor}
Let $f$ be a positive operator monotone function on $(0, \infty)$. Then
\begin{equation*}
f(A+B)\leq \dfrac{f(A)+f(B)}{4} \leq f(A)+f(B),
\end{equation*}
if and only if  $AB+BA\geq 0$.
\end{cor}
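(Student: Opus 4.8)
The plan is to obtain the corollary as a direct consequence of Theorem \ref{16}, by passing to the reciprocal function. Since $f$ is positive on $(0,\infty)$, the function $g:=1/f$ is well defined and strictly positive there, and the two displayed inequalities may be treated separately: the second one, $\frac{1}{4}(f(A)+f(B))\le f(A)+f(B)$, is immediate from $f(A),f(B)\ge 0$, so the whole content is carried by the first inequality $f(A+B)\le\frac{1}{4}(f(A)+f(B))$ together with its equivalence to $AB+BA\ge 0$.

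First I would verify that $g=1/f$ fits the hypotheses of Theorem \ref{16}, namely that $g$ is a positive operator convex function on $(0,\infty)$ with $g(0^+)=0$. Positivity is clear. For operator convexity I would invoke the standard fact from L\"owner's theory that a positive operator monotone function on $(0,\infty)$ is operator concave and that the reciprocal of a positive operator concave function is operator convex; applied to $f$ this gives the operator convexity of $g=1/f$. The behaviour of $g$ at the left endpoint, $g(0^+)=\lim\limits_{t\to 0^+}1/f(t)$, must then be recorded so as to match the normalization demanded by Theorem \ref{16}. I expect this verification---especially pinning down the reciprocal-of-operator-monotone statement together with the limit at $0^+$---to be the main obstacle, since everything afterwards is purely formal.

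Once $g=1/f$ is known to satisfy the hypotheses of Theorem \ref{16}, that theorem yields, for all positive invertible operators $A,B$, the equivalence of $AB+BA\ge 0$ with $\frac{1}{g(A+B)}\le\frac{1}{4}\left(\frac{1}{g(A)}+\frac{1}{g(B)}\right)$. Rewriting in terms of $f=1/g$, this is precisely $f(A+B)\le\frac{1}{4}(f(A)+f(B))$. Combining it with the trivial second inequality, the two-sided chain $f(A+B)\le\frac{1}{4}(f(A)+f(B))\le f(A)+f(B)$ holds if and only if $AB+BA\ge 0$; comparison with Theorem \ref{3} then exhibits the constant as sharpened from $1$ to $\frac{1}{4}$.
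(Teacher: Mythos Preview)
Your route is the one the paper indicates: deduce the corollary directly from Theorem~\ref{16}. But the step you yourself flag as ``the main obstacle''---verifying that $g:=1/f$ satisfies $g(0^+)=0$---is not a technicality that can be discharged; it actually fails. Since $f$ is positive and operator monotone on $(0,\infty)$, $f$ is numerically increasing, so $f(0^+)=\inf_{t>0}f(t)\in[0,\infty)$, and therefore $g(0^+)=1/f(0^+)\in(0,+\infty]$ is never $0$. For instance $f(t)=\sqrt{t}$ gives $g(t)=t^{-1/2}$ with $g(0^+)=+\infty$. Thus the hypothesis of Theorem~\ref{16} is not met by $g=1/f$, and your invocation of that theorem does not go through.

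The obstruction is not cosmetic. With $f(t)=\sqrt{t}$ and $A=B=I$ one has $AB+BA=2I\geq 0$, yet $f(A+B)=\sqrt{2}\,I$ while $\tfrac14\bigl(f(A)+f(B)\bigr)=\tfrac12\,I$, so the first inequality $f(A+B)\le\tfrac14\bigl(f(A)+f(B)\bigr)$ is violated. Hence the equivalence cannot hold across the whole class of positive operator monotone functions on $(0,\infty)$. The functions $f=1/g$ arising in Theorem~\ref{16} (with $g$ positive, operator convex, $g(0^+)=0$) form a different class---for example $g(t)=t$ yields $f(t)=1/t$, which is operator \emph{decreasing}---so the passage from Theorem~\ref{16} to the corollary as literally stated requires more than the bijection $f\leftrightarrow 1/f$ you propose, and the gap you anticipated is genuine rather than a routine check.
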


In what follows, our purpose is to obtain a new generalization of Theorem \ref{10} involving the weighted operator arithmetic mean and weighted operator harmonic mean.

\begin{thm}\label{9}
Let $\alpha\in(0,1)$  and $g\neq 0$ be a continuous function on an interval $J$.
\begin{enumerate}[(i)]
\item If $A, B> 0$ and $(\alpha A+(1-\alpha) B)>0$, then
\begin{equation}\label{eq8}
A\nabla_{\alpha} B - A!_{\alpha} B =\alpha (1-\alpha) (A-B) (\alpha A+(1-\alpha)B)^{-1} (A-B).
\end{equation}
\item $g\in$ {\rm{\textbf{SOC}}}$(J)$ if and only if  $g(t)>0$ and
\begin{equation*}
g(A\nabla_{\alpha} B)\leq g(A) !_{\alpha} g(B),
\end{equation*}
for all self-adjoint operators $A,B$ with spectra in $J$.
\end{enumerate}
\end{thm}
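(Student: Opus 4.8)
The plan is to prove the two parts separately, since part (i) is a purely algebraic identity that then feeds into part (ii). For part (i), I would start from the definitions $A\nabla_{\alpha}B=(1-\alpha)A+\alpha B$ and $A!_{\alpha}B=\bigl((1-\alpha)A^{-1}+\alpha B^{-1}\bigr)^{-1}$ and compute the difference directly. The cleanest route is to reduce to the case where one of the operators is the identity by a congruence: writing $A!_{\alpha}B$ in a form that isolates the weighted harmonic mean, I expect the identity to follow from the scalar-valued fact $\alpha(1-\alpha)(a-b)^2/(\alpha a+(1-\alpha)b)=(1-\alpha)a+\alpha b-\bigl((1-\alpha)a^{-1}+\alpha b^{-1}\bigr)^{-1}$, lifted to operators. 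Concretely, I would conjugate everything by $(\alpha A+(1-\alpha)B)^{-1/2}$ or manipulate $(\alpha A+(1-\alpha)B)$ as the common ``denominator'', and verify by expanding that $A\nabla_{\alpha}B-A!_{\alpha}B$ equals the claimed quadratic form $\alpha(1-\alpha)(A-B)(\alpha A+(1-\alpha)B)^{-1}(A-B)$. This is routine once the right factorization is chosen; the only care needed is that all the inverses exist, which is guaranteed by the hypotheses $A,B>0$ and $\alpha A+(1-\alpha)B>0$.

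For part (ii), the strategy is to combine the identity from part (i) with the already-available characterization of strongly operator convex functions in Theorem \ref{8}. Recall that Theorem \ref{8} states $g\in\textbf{SOC}(J)$ if and only if $g(t)>0$ and
\begin{equation*}
g(A)\nabla_{\alpha}g(B)-g(A\nabla_{\alpha}B)\geq \alpha(1-\alpha)(g(A)-g(B))(\alpha g(A)+(1-\alpha)g(B))^{-1}(g(A)-g(B)).
\end{equation*}
Now apply part (i) with the positive operators $g(A)$ and $g(B)$ in place of $A$ and $B$: the right-hand side above is exactly $g(A)\nabla_{\alpha}g(B)-g(A)!_{\alpha}g(B)$. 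Substituting this into the inequality of Theorem \ref{8}, the common term $g(A)\nabla_{\alpha}g(B)$ cancels from both sides, and the inequality collapses precisely to $g(A\nabla_{\alpha}B)\leq g(A)!_{\alpha}g(B)$. Since every step is an equivalence, this yields the ``if and only if'' immediately.

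The main thing to watch is that part (i) is invoked with $g(A),g(B)$ as its arguments, so I must confirm its hypotheses hold in that setting: since $g(t)>0$ on the spectrum forces $g(A),g(B)>0$, and then $\alpha g(A)+(1-\alpha)g(B)>0$ as a positive combination of strictly positive operators, the algebraic identity applies verbatim. I expect the genuine obstacle to be entirely in part (i)---choosing a factorization that makes the operator identity transparent rather than a brute-force expansion---because noncommutativity means one cannot simply quote the scalar identity, and the ordering of factors in $(A-B)(\alpha A+(1-\alpha)B)^{-1}(A-B)$ must be tracked carefully. Once (i) is secured, part (ii) is a short and purely formal deduction from Theorem \ref{8}.
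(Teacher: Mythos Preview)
Your proposal is correct and matches the paper's approach closely: the paper proves (i) by setting $X=\alpha A+(1-\alpha)B$, writing $A!_\alpha B=AX^{-1}B$, and expanding $(X+(1-\alpha)(A-B))X^{-1}(X-\alpha(A-B))$; part (ii) is then deduced exactly as you describe, by applying (i) with $g(A),g(B)$ and cancelling against the inequality of Theorem~\ref{8} in both directions.
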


\begin{proof}
\begin{enumerate}[(i)]
\item Let $A, B>0$. Put $X=\alpha A+(1-\alpha)B$. We have
\begin{align*}
A\nabla_{\alpha} B - A!_{\alpha} B&=(1-\alpha)A+\alpha B-((1-\alpha)A^{-1}+\alpha B^{-1})^{-1}\\
&=(1-\alpha)A+\alpha B - AX^{-1} B\\
&=(1-\alpha)A+\alpha B-(X+(1-\alpha)(A-B)) X^{-1} (X-\alpha (A-B))\\
&=(1-\alpha)A+\alpha B-X+\alpha (A-B)-(1-\alpha)(A-B)\\
&\qquad +\alpha (1-\alpha)(A-B) X^{-1} (A-B)\\
&= \alpha (1-\alpha)(A-B) X^{-1} (A-B).
\end{align*}

\item Let $0\neq g\in$ {\rm{\textbf{SOC}}}$(J)$. Then  $g(t)>0$ and so $g(A) !_{\alpha} g(B)$ is well-defined  for all self-adjoint operators $A,B$ with spectra in $J$. We have
\begin{align*}
&g(A)\nabla_{\alpha} g(B) - g(A) !_{\alpha} g(B)  \\
&\, =\alpha (1-\alpha) (g(A)-g(B))(\alpha g(A)+(1-\alpha)g(B))^{-1} (g(A)-g(B)) \quad (\text{by} \,\eqref{eq8}) \\
&\, \leq g(A)\nabla_{\alpha} g(B)-g(A\nabla_{\alpha} B)  \quad (\text{by Theorem \ref{8}}).
\end{align*}
Hence, we obtain the desired inequality $g(A\nabla_{\alpha} B)\leq g(A) !_{\alpha} g(B) $. Conversely, assume $g(t)>0$ on $J$ and $g(A\nabla_{\alpha} B)\leq g(A) !_{\alpha} g(B)$ for all self-adjoint operators $A,B$ with spectra in $J$. Then
 \begin{align*}
& g(A)\nabla_{\alpha} g(B)-g(A\nabla_{\alpha} B)\\
&\, \geq g(A)\nabla_{\alpha} g(B) - g(A) !_{\alpha} g(B)\\
 &\, =\alpha (1-\alpha) (g(A)-g(B))(\alpha g(A)+(1-\alpha)g(B))^{-1} (g(A)-g(B))  \quad (\text{by} \,\eqref{eq8}).
 \end{align*}
 Now,  Theorem \ref{8} implies that $g\in$ {\rm{\textbf{SOC}}}$(J)$.
\end{enumerate}
\end{proof}

The parallel sum for positive invertible operators $A$ and $ B$ can be written as follows
\begin{equation}\label{eq5}
\langle (A^{-1}+B^{-1})^{-1}  z,z\rangle = \inf_{x} \left\lbrace \langle Ax,x \rangle + \langle By,y \rangle \vert z=x+y\right\rbrace,
\end{equation}
for all $x, y, z \in \mathcal{H}$. We would remark that \eqref{eq5} is called as the minimum characterization of the parallel sum \cite[Lemma 5.3]{Mond}. 

\begin{lem}\label{12}
Let $A, B, C> 0$  and $\alpha\in (0,1)$. Then $C\leq A!_{\alpha} B$ if and only if
\begin{equation*}
\begin{bmatrix} \alpha A & 2\alpha (1-\alpha) C \\  2\alpha (1-\alpha) C & (1-\alpha) B \end{bmatrix} \geq \alpha (1-\alpha)\begin{bmatrix} C &C \\ C & C \end{bmatrix}.
\end{equation*}
\end{lem}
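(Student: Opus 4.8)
The plan is to translate both sides of the claimed equivalence into statements about quadratic forms and to match them through the minimum characterization \eqref{eq5} of the parallel sum. Write $M$ for the block operator on the left and $N$ for the one on the right, so that the asserted inequality reads $M\geq N$, i.e. $\langle (M-N)(u,v),(u,v)\rangle\geq 0$ for all $u,v\in\mathcal{H}$. First I would expand this quadratic form directly. Using the self-adjointness of $C$, the off-diagonal contributions combine and the expression collapses to
\[
\langle (M-N)(u,v),(u,v)\rangle = \alpha\langle Au,u\rangle + (1-\alpha)\langle Bv,v\rangle - \alpha(1-\alpha)\langle C(u-v),u-v\rangle,
\]
so that $M\geq N$ is equivalent to
\[
\alpha(1-\alpha)\langle C(u-v),u-v\rangle \leq \alpha\langle Au,u\rangle + (1-\alpha)\langle Bv,v\rangle \qquad (u,v\in\mathcal{H}).
\]

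Next I would treat the mean side. Since $A!_{\alpha}B=\left((1-\alpha)A^{-1}+\alpha B^{-1}\right)^{-1}$, a rescaling identifies it with an unweighted parallel sum: writing $X:Y=(X^{-1}+Y^{-1})^{-1}$, one has $A!_{\alpha}B=\frac{A}{1-\alpha}:\frac{B}{\alpha}$, and both $\frac{A}{1-\alpha}$ and $\frac{B}{\alpha}$ are positive invertible. Applying the minimum characterization \eqref{eq5} with $X=\frac{A}{1-\alpha}$ and $Y=\frac{B}{\alpha}$ gives
\[
\langle (A!_{\alpha}B)z,z\rangle = \inf_{z=x+y}\left\{\tfrac{1}{1-\alpha}\langle Ax,x\rangle + \tfrac{1}{\alpha}\langle By,y\rangle\right\}.
\]
Consequently $C\leq A!_{\alpha}B$ holds if and only if $\langle Cz,z\rangle$ is a lower bound for this infimum for every $z$, that is, if and only if
\[
\langle C(x+y),x+y\rangle \leq \tfrac{1}{1-\alpha}\langle Ax,x\rangle + \tfrac{1}{\alpha}\langle By,y\rangle \qquad (x,y\in\mathcal{H}).
\]

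Finally I would match the two conditions. Dividing the block-form inequality by $\alpha(1-\alpha)$ and substituting $x=u$, $y=-v$ (so that $u-v=x+y$, $\langle Au,u\rangle=\langle Ax,x\rangle$, and $\langle Bv,v\rangle=\langle By,y\rangle$) turns it verbatim into the parallel-sum condition, and the same substitution run backwards recovers the block form; this establishes the equivalence in both directions. I expect the only delicate points to be the bookkeeping in the expansion of $\langle (M-N)(u,v),(u,v)\rangle$ -- in particular verifying that the coefficient $2\alpha(1-\alpha)$ on the off-diagonal blocks is exactly what makes the cross terms assemble into $\langle C(u-v),u-v\rangle$ -- together with the sign flip $y=-v$ that aligns the two quadratic forms.
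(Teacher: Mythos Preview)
Your proof is correct and follows essentially the same route as the paper: both reduce the block inequality to the quadratic-form condition $\alpha(1-\alpha)\langle C(u-v),u-v\rangle\leq \alpha\langle Au,u\rangle+(1-\alpha)\langle Bv,v\rangle$, invoke the minimum characterization \eqref{eq5} of the parallel sum (the paper writes the weighted version \eqref{eq6} directly, you pass through $A!_{\alpha}B=\frac{A}{1-\alpha}:\frac{B}{\alpha}$), and match the two via the sign flip $y=-v$.
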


\begin{proof}
The proof follows the outlines of \cite[Lemma 2.3]{Uchiyama3}. Let $A, B, C>0$ and $\alpha\in (0,1)$. It follows from the minimum characterization of the parallel sum that
\begin{equation}\label{eq6}
\langle ((1-\alpha) A^{-1}+\alpha B^{-1})^{-1}z,z\rangle = \frac{1}{\alpha (1-\alpha)}\inf_{x} \left\lbrace \alpha \langle Ax,x \rangle + (1-\alpha) \langle By,y \rangle \vert z=x+y\right\rbrace ,
\end{equation}
for all $x, y, z \in \mathcal{H}$. Suppose that
\begin{equation*}
\begin{bmatrix} \alpha A & 2\alpha (1-\alpha) C \\  2\alpha (1-\alpha) C & (1-\alpha) B \end{bmatrix} \geq \alpha (1-\alpha)\begin{bmatrix} C &C \\ C & C \end{bmatrix},
\end{equation*}
which is equivalent to 
\begin{equation}\label{eq13}
M=\begin{bmatrix} \alpha A-\alpha (1-\alpha) C  & \alpha (1-\alpha) C \\  \alpha (1-\alpha) C & (1-\alpha) B-\alpha (1-\alpha) C \end{bmatrix} \geq 0.
\end{equation}
Note that $M\geq 0$ if and only if  $ X^{*} M X \geq 0$, where $X=\begin{bmatrix} x  \\ y \end{bmatrix}$ for all $x, y\in \mathcal{H}$. By easy computation, we find that \eqref{eq13} holds if and only if
\begin{equation*}
\langle  C(x-y),(x-y)\rangle \leq \frac{1}{\alpha (1-\alpha)}\left( \alpha \langle Ax,x \rangle + (1-\alpha) \langle By,y \rangle \right),
\end{equation*}
for all $x, y\in \mathcal{H}$. Therefore, applying \eqref{eq6}, we  conclude that $M\geq 0$ if and only if
\begin{align*}
\langle  C(x+(-y)),(x+(-y))\rangle &\leq \frac{1}{\alpha (1-\alpha)}\inf_{x} \lbrace \alpha \langle Ax,x \rangle + (1-\alpha) \langle By,y \rangle  \vert z=x+(-y)  \rbrace \\
&=\langle  ((1-\alpha) A^{-1}+\alpha B^{-1})^{-1} (x+(-y)),(x+(-y))\rangle ,
\end{align*}
which gives $C\leq ((1-\alpha) A^{-1}+\alpha B^{-1})^{-1}$, and consequently  $C\leq A!_{\alpha} B$.
\end{proof}

Now, Theorem \ref{9} and Lemma \ref{12} entail that
\begin{cor}\label{13}
Let  $g\neq 0$ be  continuous function on $J$ and $\alpha\in (0,1)$. Then $g\in$ {\rm{\textbf{SOC}}}$(J)$ if and only if $g(t)>0$ and 
\begin{equation*}
\begin{bmatrix} \alpha g(A) & 2\alpha (1-\alpha) g(A\nabla_{\alpha} B) \\ 2\alpha (1-\alpha) g(A\nabla_{\alpha} B) & (1-\alpha) g(B) \end{bmatrix} \geq \alpha (1-\alpha) \begin{bmatrix} g(A\nabla_{\alpha} B) & g(A\nabla_{\alpha} B) \\ g(A\nabla_{\alpha} B) & g(A\nabla_{\alpha} B) \end{bmatrix},
\end{equation*}
for all self-adjoint operators $A, B$ with spectra in $J$.
\end{cor}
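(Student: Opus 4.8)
The plan is to read the corollary off directly as a synthesis of Theorem \ref{9}(ii) and Lemma \ref{12}, with the shared hypothesis $g(t)>0$ supplying exactly the positivity needed to invoke the lemma. First I would fix $\alpha\in(0,1)$ and assume throughout that $g(t)>0$ on $J$; this appears as a hypothesis both in the corollary and in Theorem \ref{9}(ii), so it costs nothing to carry along. Under this assumption, for any self-adjoint $A,B$ with spectra in $J$, the operator $A\nabla_{\alpha}B=(1-\alpha)A+\alpha B$ is a convex combination and hence again has spectrum in the interval $J$, so $g(A\nabla_{\alpha}B)$ is well-defined; moreover all three operators $g(A)$, $g(B)$, $g(A\nabla_{\alpha}B)$ are strictly positive, since applying $g$ to such operators produces spectra inside $g(J)\subseteq(0,\infty)$.

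Next I would apply Lemma \ref{12} with the substitution $A\mapsto g(A)$, $B\mapsto g(B)$, and $C\mapsto g(A\nabla_{\alpha}B)$, which is legitimate precisely because these three operators are positive. The lemma then asserts that
\[
g(A\nabla_{\alpha}B)\leq g(A)\,!_{\alpha}\,g(B)
\]
holds if and only if
\[
\begin{bmatrix} \alpha g(A) & 2\alpha(1-\alpha)g(A\nabla_{\alpha}B)\\ 2\alpha(1-\alpha)g(A\nabla_{\alpha}B) & (1-\alpha)g(B)\end{bmatrix}\geq \alpha(1-\alpha)\begin{bmatrix} g(A\nabla_{\alpha}B) & g(A\nabla_{\alpha}B)\\ g(A\nabla_{\alpha}B) & g(A\nabla_{\alpha}B)\end{bmatrix},
\]
which is exactly the $2\times2$ block inequality appearing in the corollary.

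Finally I would invoke Theorem \ref{9}(ii), by which $g\in\textbf{SOC}(J)$ if and only if $g(t)>0$ and $g(A\nabla_{\alpha}B)\leq g(A)\,!_{\alpha}\,g(B)$ for all self-adjoint $A,B$ with spectra in $J$. Chaining the two equivalences, and quantifying the block inequality over all such $A,B$, yields that $g\in\textbf{SOC}(J)$ if and only if $g(t)>0$ together with the stated matrix inequality, as required. There is no genuine obstacle here: the argument is a clean composition of the two cited results, and the only points demanding care are bookkeeping ones, namely confirming that the positivity hypothesis $g(t)>0$ is in force so that Lemma \ref{12} applies to the triple $g(A),g(B),g(A\nabla_{\alpha}B)$, and checking that the universal quantifier over $A,B$ threads consistently through both directions of the combined \emph{if and only if}.
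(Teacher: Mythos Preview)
Your proposal is correct and matches the paper's own treatment exactly: the paper presents Corollary~\ref{13} as an immediate consequence of Theorem~\ref{9}(ii) and Lemma~\ref{12}, without further argument, and your write-up simply spells out that composition. The only care points you flag---positivity of $g(A),g(B),g(A\nabla_{\alpha}B)$ so that Lemma~\ref{12} applies, and that $g(t)>0$ is available on both sides of the biconditional---are the right ones and are handled correctly.
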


\begin{rem}
If $\alpha=\frac{1}{2}$, Theorem \ref{9} gives \cite[Lemma 2.1 (i)]{Uchiyama3}. We also remark that Theorem \ref{9} and Corollary \ref{13} with $\alpha=\frac{1}{2}$ imply Theorem \ref{10} .
\end{rem}

Finally in this section, we consider relations between strongly operator convex functions and Kwong functions on $(0, \infty)$.

\begin{prop}\label{14}
Let $g\neq 0$ be a continuous function on $(0, \infty)$.
\begin{enumerate}[(i)]
\item If $g\in$ {\rm{\textbf{SOC}}}$(0, \infty)$, then $g(t^{p})\in$ {\rm{\textbf{Kwong}}}$(0, \infty)$, where $-1\leq p\leq 1$.
\item If $g\in$ {\rm{\textbf{Kwong}}}$(0, \infty)$, then $\frac{g(t^{p})}{t^{p}}\in$ {\rm{\textbf{SOC}}}$(0, \infty)$, where $0\leq p\leq\frac{1}{2}$.
\end{enumerate}
\end{prop}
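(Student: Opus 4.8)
The plan is to handle the two parts separately: part (i) rests on the integral representation in Theorem \ref{1}(i), and part (ii) on Audenaert's Theorem \ref{2} combined with a composition argument. Throughout I would use the facts, recalled in the introduction, that $f$ is Kwong if and only if $\tfrac1f$ is Kwong, that every non-negative operator decreasing function is Kwong, and that every operator monotone function from $(0,\infty)$ into itself is Kwong.

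For part (i), I would first dispose of the constant case, for which $g(t^p)$ is a positive constant whose Kwong matrix equals a non-negative multiple of the Cauchy matrix $C=[1/(t_i+t_j)]$ and hence is positive semidefinite. For non-constant $g\in\textbf{SOC}(0,\infty)$, Theorem \ref{1}(i) with $a=0$, after the substitution $s=-\lambda$, gives $g(t)=g(\infty)+\int_{[0,\infty)}\frac{1}{t+s}\,d\nu(s)$ with $g(\infty)\ge 0$ and $\nu$ a positive measure on $[0,\infty)$, whence $g(t^p)=g(\infty)+\int_{[0,\infty)}\frac{1}{t^p+s}\,d\nu(s)$. The key building block is that $t^p$ is Kwong for $-1\le p\le 1$: for $0\le p\le 1$ the map $t\mapsto t^p$ is operator monotone from $(0,\infty)$ into itself, while for $-1\le p\le 0$ it is non-negative operator decreasing. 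Writing $h_s(t)=\frac{1}{t^p+s}$, the identity $K_{t^p+s}=K_{t^p}+2s\,C$ writes the Kwong matrix of $t^p+s$ as a sum of two positive semidefinite matrices (here $K_{t^p}\ge 0$ since $t^p$ is Kwong, and $C\ge 0$), so $t^p+s$ is Kwong and therefore $h_s$ is Kwong for every $s\ge 0$. Finally, computing entrywise gives $K_{g(t^p)}=2g(\infty)\,C+\int_{[0,\infty)}K_{h_s}\,d\nu(s)$, a non-negative multiple of $C$ plus a positive-measure integral of positive semidefinite matrices, hence positive semidefinite. This proves $g(t^p)\in\textbf{Kwong}(0,\infty)$.

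For part (ii), I would invoke Theorem \ref{2}: since $g\in\textbf{Kwong}(0,\infty)$, the function $\psi(t)=\frac{g(\sqrt{t})}{\sqrt{t}}$ is non-negative and operator decreasing on $(0,\infty)$. The argument then hinges on the substitution identity $\psi(t^{2p})=\frac{g(t^p)}{t^p}$, valid for $t>0$. For $0\le p\le\tfrac12$ we have $0\le 2p\le 1$, so $t\mapsto t^{2p}$ is operator monotone from $(0,\infty)$ into itself; and the composition of an operator decreasing function with an operator monotone one is again operator decreasing, since $A\le B$ forces $A^{2p}\le B^{2p}$ and then $\psi(A^{2p})\ge\psi(B^{2p})$. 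Hence $\frac{g(t^p)}{t^p}=\psi(t^{2p})$ is operator decreasing and non-negative. As it is not identically zero (otherwise $g\equiv 0$), the analyticity of operator decreasing functions forces it to be strictly positive, and Theorem \ref{1}(i) then yields $\frac{g(t^p)}{t^p}\in\textbf{SOC}(0,\infty)$.

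The substantive inputs are the two cited theorems; the remaining work is essentially bookkeeping, and I expect the minor obstacles to be the following. In part (i) the only point needing care is that positive semidefiniteness survives integration against the positive measure $\nu$, which one sees by pairing $K_{h_s}$ with an arbitrary vector and integrating the resulting non-negative scalar, together with verifying the matrix identity $K_{t^p+s}=K_{t^p}+2s\,C$. In part (ii) the delicate features are the exact exponent matching ($0\le p\le\tfrac12$ is precisely what makes $t^{2p}$ operator monotone, just as $-1\le p\le 1$ in part (i) is exactly what makes $t^p$ Kwong) and the upgrade of Audenaert's non-negativity to the strict positivity demanded by the criterion in Theorem \ref{1}(i).
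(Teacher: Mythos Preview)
Your proposal is correct in both parts, and your part (ii) is essentially identical to the paper's argument (you are, if anything, more careful about upgrading Audenaert's non-negativity to strict positivity, which the paper glosses over).

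For part (i), however, you take a genuinely more elaborate route than the paper. The paper does not touch the integral representation at all: it simply observes that, by Theorem~\ref{1}(i), $g$ is positive and operator decreasing on $(0,\infty)$, so the composition $g(t^{p})$ is positive and either operator decreasing (when $0\le p\le 1$, since $t\mapsto t^{p}$ is operator monotone) or operator monotone (when $-1\le p<0$, since $t\mapsto t^{p}$ is then operator decreasing and the composition of two decreasing maps is increasing). In either case one of the two facts from the introduction---positive operator monotone functions are Kwong, and non-negative operator decreasing functions are Kwong---finishes the job in one line. Your approach via $K_{t^{p}+s}=K_{t^{p}}+2sC$ and the reciprocal stability of the Kwong class is correct and self-contained, but it reproves, by hand, a special case of what the composition argument gives for free. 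What your route does buy is that it avoids invoking the full monotone/decreasing characterization of $\textbf{SOC}(0,\infty)$ for the Kwong conclusion, working instead directly at the level of Kwong matrices; the paper's route is shorter because it leans on that characterization and on Kwong's theorem for operator monotone functions as a black box.
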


\begin{proof}
\begin{enumerate}[(i)]
\item Let  $ g\in$ {\rm{\textbf{SOC}}}$(0, \infty)$ and $-1\leq p\leq 1$. Then $g(t^{p})$ is a positive operator decreasing or increasing function on $(0, \infty)$. Thus $g(t^{p})$ is a Kwong function on $(0, \infty)$.
\item Let $g$ be a Kwong  function on $(0,\infty)$ and $0\leq p\leq \frac{1}{2}$. It follows from Theorem \ref{2} that $\frac{ g(t^{1/2})}{t^{1/2}}$ is  a positive operator  decreasing function on $(0,\infty)$. Hence  $\frac{g(t^{p})}{t^{p}}=\frac{ g(t^{2p/2})}{t^{2p/2}}$ is also a positive  operator  decreasing. Therefore $\frac{g(t^{p})}{t^{p}}\in\textbf{SOC}(0, \infty)$.
\end{enumerate}
\end{proof}

\begin{ex}
The function $g(t)=\sinh^{-1}(t)$ is  Kwong  on $(0,\infty)$ \cite[Example 2.10]{15}. Let  $A, B>0$ and $0\leq p\leq\frac{1}{2}$. Now,  Proposition \ref{25} and (ii) of Proposition \ref{14} ensure that
\begin{equation*}
\frac{ \sinh^{-1} (A+B)^{p}}{(A+B)^{p} }\leq \frac{\sinh^{-1} (A^p)}{A^{p}}+ \frac{\sinh^{-1} (B^p)}{B^{p}}.
\end{equation*}
\end{ex}

\begin{cor}\label{7}
Let $0\neq g\in$ \rm\textbf{SOC}$(0, \infty)$, $-1\leq p\leq 1$ and  $A$ and $B$ be positive invertible operators. If $AB+BA\geq 0$, then
\begin{equation*}
g(A^{p}) B+B g(A^{p})\geq 0.
\end{equation*}
\end{cor}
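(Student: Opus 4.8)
The plan is to recognize the desired inequality $g(A^{p})B+Bg(A^{p})\geq 0$ as an instance of the symmetrized-positivity-preservation property enjoyed by Kwong functions, with the relevant Kwong function supplied by Proposition \ref{14}(i). Set $h(t)=g(t^{p})$ on $(0,\infty)$, so that $h(A)=g(A^{p})$ by the functional calculus. Since $0\neq g\in\textbf{SOC}(0,\infty)$ and $-1\leq p\leq 1$, Proposition \ref{14}(i) gives $h\in\textbf{Kwong}(0,\infty)$. It therefore suffices to prove the bridging statement: \emph{if $h\in\textbf{Kwong}(0,\infty)$, $A>0$, and $B$ is self-adjoint with $AB+BA\geq 0$, then $h(A)B+Bh(A)\geq 0$}; the hypothesis $B>0$ of the corollary is a special case.

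The key step is this bridging lemma, which I would establish through the Schur (Hadamard) product theorem. Working first with an operator $A$ having finite spectrum, write $A=\sum_{i}t_{i}P_{i}$ with distinct eigenvalues $t_{i}$ and spectral projections $P_{i}$. Then $AB+BA=\sum_{i,j}(t_{i}+t_{j})P_{i}BP_{j}$, while $h(A)B+Bh(A)=\sum_{i,j}(h(t_{i})+h(t_{j}))P_{i}BP_{j}=\sum_{i,j}\frac{h(t_{i})+h(t_{j})}{t_{i}+t_{j}}\,(t_{i}+t_{j})P_{i}BP_{j}$. Hence the blocks of $h(A)B+Bh(A)$ arise from those of the positive operator $C:=AB+BA$ by multiplying the $(i,j)$ block by the scalar entry $(K_{h})_{ij}$ of the Kwong matrix $K_{h}=\big[\frac{h(t_{i})+h(t_{j})}{t_{i}+t_{j}}\big]$. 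Since $K_{h}\geq 0$, factoring $K_{h}=V^{*}V$ and regrouping shows $\langle (h(A)B+Bh(A))x,x\rangle=\sum_{m}\langle Cy_{m},y_{m}\rangle\geq 0$ for vectors $y_{m}$ built from the columns of $V$; this is exactly the block form of the Schur product theorem. Applying it with $h(A)=g(A^{p})$ and $C=AB+BA\geq 0$ yields the corollary.

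The main obstacle is twofold. First, the block decomposition is transparent only when $A$ has finite spectrum, so passing to a general positive invertible $A$ on an infinite-dimensional $\mathcal{H}$ requires a limiting/spectral-truncation argument. Second, and more fundamentally, one may try to bypass the Schur machinery via the integral representation \eqref{eq-soc}, writing $g(t^{p})=g(\infty)+\int_{(-\infty,0]}(t^{p}-\lambda)^{-1}d\mu(\lambda)$ and reducing to the resolvent building blocks, i.e.\ to the inequality $(A^{p}-\lambda)^{-1}B+B(A^{p}-\lambda)^{-1}\geq 0$ for each $\lambda\leq 0$. In the case $p=1$ this follows at once: conjugating $AB+BA\geq 0$ by the positive operator $S=(A-\lambda)^{-1}$ gives $SB+BS+2\lambda SBS\geq 0$, whence $SB+BS\geq -2\lambda SBS\geq 0$ because $\lambda\leq 0$. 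For general $p\in[-1,1]$, however, the power $A^{p}$ does not interact cleanly with the hypothesis $AB+BA\geq 0$, which is precisely why the Kwong-matrix route is preferable. I therefore expect the genuinely delicate point to be confirming the Kwong-to-symmetrized-product bridge in full generality; once it is in hand, the corollary is immediate from Proposition \ref{14}(i).
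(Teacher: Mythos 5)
Your proposal is correct and follows the paper's route exactly: apply Proposition \ref{14}(i) to conclude that $h(t)=g(t^{p})$ is Kwong on $(0,\infty)$, then pass from Kwong-ness to positivity of the symmetrized product $g(A^{p})B+Bg(A^{p})$. The bridging lemma you set out to prove via the Schur product theorem is precisely the result the paper cites at this step (\cite[Theorem 2.1]{15}), so the ``genuinely delicate point'' you flag --- extending the block argument beyond operators with finite spectrum --- is already settled in the cited literature and needs no re-derivation.
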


\begin{proof}
According to  (i) of Proposition \ref{14}, $g(t^{p})$ is Kwong on $(0, \infty)$. Hence, we deduce the desired inequality by applying \cite[Theorem 2.1]{15} with the Kwong function $g(t^{p})$.
\end{proof}

Uchiyama \cite{Uchiyama2} showed that if $AB+BA\geq 0$ and $0< p, q\leq 1$, then  
$A^{p}B^{q}+B^{q}A^{p}\geq 0$  for positive operators $A,B$.  Corollary \ref{7} gives a new improvement as follows.
\begin{cor}
Let $A$ and $B$ be positive invertible operators and $-1\leq p, q\leq 1$. If $AB+BA\geq 0$, then $A^{-p}B^{-q}+B^{-q}A^{-p}\geq 0$.
\end{cor}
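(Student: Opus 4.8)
The plan is to derive the statement from Corollary \ref{7} by applying it twice, each time with the single strongly operator convex function $g(t)=1/t$ on $(0,\infty)$. First I would record that $g(t)=1/t$ is positive and operator decreasing on $(0,\infty)$, hence $g\in\textbf{SOC}(0,\infty)$ by part (i) of Theorem \ref{1}; for this choice of $g$ one has $g(A^{p})=(A^{p})^{-1}=A^{-p}$ for any positive invertible $A$ and any exponent $p$, since $A^{p}$ is again positive invertible.

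Next, starting from the hypothesis $AB+BA\geq 0$ and using $-1\le p\le 1$, Corollary \ref{7} applied to $g(t)=1/t$ gives $g(A^{p})B+Bg(A^{p})\geq 0$, that is,
\begin{equation*}
A^{-p}B+BA^{-p}\geq 0.
\end{equation*}
Now set $C=A^{-p}$, which is again a positive invertible operator. The last display says $CB+BC\geq 0$, and since the symmetrized product is symmetric this is the same as $BC+CB\geq 0$.

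Finally I would apply Corollary \ref{7} once more, now with the roles of the two operators interchanged: taking $B$ in the role of the first operator and $C=A^{-p}$ in the role of the second, the hypothesis $BC+CB\geq 0$ together with $-1\le q\le 1$ yields $g(B^{q})C+Cg(B^{q})\geq 0$, i.e.
\begin{equation*}
B^{-q}A^{-p}+A^{-p}B^{-q}\geq 0,
\end{equation*}
which is the asserted inequality. The only points that require care are the (routine) verification that $1/t\in\textbf{SOC}(0,\infty)$ and the observation that the symmetrized-product hypothesis of Corollary \ref{7} is invariant under swapping the two operators, so that the second application is legitimate; I do not expect any genuine obstacle here, the whole argument being a clean iteration of Corollary \ref{7} with the fixed kernel $g(t)=1/t$.
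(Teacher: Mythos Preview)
Your argument is correct and matches the paper's intended derivation: the paper gives no explicit proof but states that the corollary follows from Corollary~\ref{7}, and your double application of Corollary~\ref{7} with $g(t)=1/t\in\textbf{SOC}(0,\infty)$ is precisely the natural way to carry this out. The two checkpoints you flag (that $1/t$ is strongly operator convex on $(0,\infty)$ via Theorem~\ref{1}(i), and that the symmetrized-product hypothesis is symmetric in the two operators) are both routine, so there is no gap.
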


\section{Estimation of lower bounds for strongly operator convex functions}\label{sec3}
We start this section with the following useful lemma.

\begin{lem}\emph{\cite[Lemma 2.1]{MN2012} and \cite[Lemma 3.1]{NM}}\label{15}
Let $A$ and $B$ be positive invertible operators such that $A-B\geq m>0$ for some scalar $m>0$. Then
\begin{align*}
B^{-1}-A^{-1}&\geq m (\Vert A\Vert -m)^{-1} \Vert A \Vert^{-1}\\
B^{-1}-A^{-1}&\geq m \Vert B \Vert^{-1} (\Vert B\Vert +m)^{-1},
\end{align*}
where $\Vert \cdot \Vert  $ is the operator norm.
\end{lem}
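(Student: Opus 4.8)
The plan is to prove each of the two inequalities by reducing it to a computation involving a single operator, exploiting the order-reversing property of the inversion map $t\mapsto t^{-1}$ on positive operators together with a symmetric conjugation that turns the mixed expression $B^{-1}-A^{-1}$ into a genuine congruence. The point is that the naive factorization $B^{-1}-A^{-1}=B^{-1}(A-B)A^{-1}$ is not helpful, since sandwiching $A-B\geq mI$ between the two distinct operators $B^{-1}$ and $A^{-1}$ neither preserves self-adjointness nor positivity; instead I would conjugate by a single square root and reduce to functions of one operator, where scalar monotonicity can be applied spectrally.

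For the first inequality, I would start from the identity
\begin{equation*}
B^{-1}-A^{-1}=A^{-1/2}\bigl(A^{1/2}B^{-1}A^{1/2}-I\bigr)A^{-1/2}.
\end{equation*}
Since $A-B\geq mI$ gives $0<B\leq A-mI$ (note $A>mI$, so $A-mI$ is invertible), the order-reversal of inversion yields $B^{-1}\geq (A-mI)^{-1}$, hence $A^{1/2}B^{-1}A^{1/2}\geq A^{1/2}(A-mI)^{-1}A^{1/2}=A(A-mI)^{-1}$, the last equality because all factors are functions of $A$ and therefore commute. Subtracting $I$ and simplifying $A(A-mI)^{-1}-I=m(A-mI)^{-1}$, the congruence above gives
\begin{equation*}
B^{-1}-A^{-1}\geq m\,A^{-1/2}(A-mI)^{-1}A^{-1/2}=m\,[A(A-mI)]^{-1}.
\end{equation*}
Finally, because the scalar function $t\mapsto t(t-m)$ is increasing on the spectrum of $A$, which lies in $(m,\Vert A\Vert]$, I would bound $A(A-mI)\leq \Vert A\Vert(\Vert A\Vert-m)I$ by functional calculus, invert, and obtain $B^{-1}-A^{-1}\geq m(\Vert A\Vert-m)^{-1}\Vert A\Vert^{-1}$.

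The second inequality is entirely analogous, conjugating instead by $B^{-1/2}$ via
\begin{equation*}
B^{-1}-A^{-1}=B^{-1/2}\bigl(I-B^{1/2}A^{-1}B^{1/2}\bigr)B^{-1/2},
\end{equation*}
and using $A\geq B+mI$, so $A^{-1}\leq (B+mI)^{-1}$, which gives $I-B^{1/2}A^{-1}B^{1/2}\geq I-B(B+mI)^{-1}=m(B+mI)^{-1}$ and hence $B^{-1}-A^{-1}\geq m[B(B+mI)]^{-1}$; the norm bound $B(B+mI)\leq\Vert B\Vert(\Vert B\Vert+m)I$, valid since $t\mapsto t(t+m)$ is increasing on $(0,\infty)$, then finishes the argument. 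The only genuine obstacle is the first, structural step: recognizing that the symmetric conjugation by $A^{\pm1/2}$ (respectively $B^{\pm1/2}$) is what makes the inequalities accessible, since it simultaneously guarantees positivity via congruence and collapses the two-operator expression into functions of a single operator, where the spectral bounds $A\leq\Vert A\Vert I$ and $B\leq\Vert B\Vert I$ can be used. Once this is in place, every remaining step is a routine scalar manipulation.
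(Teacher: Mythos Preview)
Your proof is correct. Note, however, that the paper does not actually prove this lemma: it is quoted without proof from \cite{MN2012} and \cite{NM}, and then used (via Corollary~\ref{19}) in the proofs of Theorems~\ref{20} and~\ref{21}. So there is no ``paper's own proof'' to compare against here.

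For what it is worth, your argument is essentially the standard one from the cited sources: the key device is the symmetric congruence
\[
B^{-1}-A^{-1}=A^{-1/2}\bigl(A^{1/2}B^{-1}A^{1/2}-I\bigr)A^{-1/2}
\quad\text{or}\quad
B^{-1}-A^{-1}=B^{-1/2}\bigl(I-B^{1/2}A^{-1}B^{1/2}\bigr)B^{-1/2},
\]
which reduces the two-operator inequality to a one-operator spectral bound, exactly as you outline. Every step---the operator monotonicity of $t\mapsto t^{-1}$, the commutation of functions of a single operator, and the scalar monotonicity of $t(t\pm m)$ on the relevant spectral interval---is valid as stated.
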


If $A-B\geq m>0$, then $\Vert A \Vert\geq \Vert B\Vert+m$. Thus,  Lemma \ref{15} implies 

\begin{cor}\label{19}
Let $A$ and $B$ be positive invertible operators such that $A-B\geq m>0$ for some scalar $m>0$. Then
\begin{equation*}
B^{-1}-A^{-1}\geq m \Vert B \Vert^{-1} (\Vert B\Vert +m)^{-1} \geq  m (\Vert A\Vert -m)^{-1} \Vert A \Vert^{-1}>0.
\end{equation*}
\end{cor}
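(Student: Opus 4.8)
The plan is to read the leftmost inequality straight off Lemma \ref{15}, to reduce the middle inequality to an elementary scalar comparison, and to verify strict positivity at the end. The key observation is that, because $A$ and $B$ are positive invertible, their norms $\Vert A\Vert$ and $\Vert B\Vert$ are strictly positive scalars; hence all three quantities appearing to the right of the first $\geq$ are real numbers, and the only genuinely operator-theoretic input is the first inequality.

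First I would invoke the second estimate of Lemma \ref{15}, which yields $B^{-1}-A^{-1}\geq m\Vert B\Vert^{-1}(\Vert B\Vert+m)^{-1}$ verbatim and thereby settles the leftmost $\geq$. For the middle inequality I would use the fact recorded just before the statement, namely that $A-B\geq m>0$ forces $\Vert A\Vert\geq\Vert B\Vert+m$. Writing $a=\Vert A\Vert$ and $b=\Vert B\Vert$, the claim $m\,b^{-1}(b+m)^{-1}\geq m\,(a-m)^{-1}a^{-1}$ is, after cancelling the common positive factor $m$ and clearing the positive denominators, equivalent to $(a-m)a\geq b(b+m)$. Since $a\geq b+m$, we have simultaneously $a-m\geq b>0$ and $a\geq b+m>0$, and multiplying these two inequalities between positive quantities gives $(a-m)a\geq b(b+m)$, exactly as needed.

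The rightmost strict positivity is then immediate: $m>0$, $\Vert A\Vert>0$, and $\Vert A\Vert-m\geq\Vert B\Vert>0$ since $B$ is positive invertible, so $m(\Vert A\Vert-m)^{-1}\Vert A\Vert^{-1}>0$. I expect no real obstacle here; the only point requiring a moment's care is confirming that the denominator $\Vert A\Vert-m$ is genuinely positive, which once more follows from $\Vert A\Vert\geq\Vert B\Vert+m$ together with $\Vert B\Vert>0$.
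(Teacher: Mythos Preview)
Your proposal is correct and follows essentially the same approach as the paper: the paper's one-line proof also combines the observation $\Vert A\Vert\geq\Vert B\Vert+m$ with Lemma~\ref{15}, and your write-up simply makes explicit the elementary scalar comparison $(a-m)a\geq b(b+m)$ that the paper leaves to the reader.
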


Following we estimate lower bounds of $g(B)-g(A)$ when $g$ is a strongly operator convex function and $A-B>0$.

\begin{thm} \label{20}
Let $0\neq g\in$ {\rm{\textbf{SOC}}}$(a, \infty)$ with $a>-\infty$ and $A, B$ be self-adjoint operators with spectra in $(a, \infty)$. Then 
\begin{enumerate}[(i)]
\item If $A>B$, then $g(A)<g(B)$.
\item If $A-B\geq m>0$ for some scalar $m>0$, then
\begin{equation*}
g(B)-g(A)\geq g(\sigma_{B})-g(\sigma_{B}+m)\geq g(\sigma_{A}-m)-g(\sigma_{A})>0,
\end{equation*}
in which $\sigma_{A}=\max {\rm sp}(A)$ and $\sigma_{B}=\max {\rm sp}(B)$.
\end{enumerate}
\end{thm}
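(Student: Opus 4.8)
The plan is to reduce the whole statement to the half-line $(0,\infty)$, where Corollary \ref{18} already supplies exactly the required lower bounds, by translating the variable by $a$. Set $\tilde g(s)=g(s+a)$ for $s\in(0,\infty)$. The first step is to verify that $\tilde g\in\textbf{SOC}(0,\infty)$. By Theorem \ref{1}(i), a non-constant $g$ lies in $\textbf{SOC}(a,\infty)$ precisely when $g$ is positive and operator decreasing on $(a,\infty)$; both positivity and the operator decreasing property are unaffected by the shift $t\mapsto t+a$, since $S\le T$ with spectra in $(0,\infty)$ holds if and only if $S+a\le T+a$ with spectra in $(a,\infty)$. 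Hence $\tilde g$ is a non-constant positive operator decreasing function on $(0,\infty)$, so $\tilde g\in\textbf{SOC}(0,\infty)$ by Theorem \ref{1} once more.

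Next I would put $\tilde A=A-a$ and $\tilde B=B-a$, which are positive invertible operators with $\tilde A-\tilde B=A-B$. For part (ii) the hypothesis gives $\tilde A-\tilde B\ge m>0$, so applying Corollary \ref{18} to $\tilde g,\tilde A,\tilde B$ yields
\[
\tilde g(\tilde B)-\tilde g(\tilde A)\ge \tilde g(\Vert\tilde B\Vert)-\tilde g(\Vert\tilde B\Vert+m)\ge \tilde g(\Vert\tilde A\Vert-m)-\tilde g(\Vert\tilde A\Vert)>0.
\]
The remaining work is to translate back. Since $\tilde g(\tilde X)=g(\tilde X+a)=g(X)$, the left-hand side is $g(B)-g(A)$; and since $\tilde A,\tilde B>0$, their norms equal their largest spectral values, $\Vert\tilde B\Vert=\sigma_B-a$ and $\Vert\tilde A\Vert=\sigma_A-a$. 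Combining these with $\tilde g(\,\cdot\,)=g(\,\cdot\,+a)$ converts the four terms into $g(B)-g(A)$, $g(\sigma_B)-g(\sigma_B+m)$, and $g(\sigma_A-m)-g(\sigma_A)$, which is exactly the chain asserted in (ii). I would also record that $g(\sigma_A-m)$ is well-defined because $A\ge B+m$ forces $\min{\rm sp}(A)>a+m$, whence $\sigma_A-m>a$.

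Part (i) then follows from (ii): if $A>B$, then $A-B$ is positive invertible, so $A-B\ge mI$ with $m=\min{\rm sp}(A-B)>0$, and (ii) gives $g(B)-g(A)\ge g(\sigma_B)-g(\sigma_B+m)>0$, i.e.\ $g(A)<g(B)$. The step deserving the most care is the translation-invariance of $\textbf{SOC}(a,\infty)$ together with the bookkeeping that turns $\Vert\tilde B\Vert,\Vert\tilde A\Vert$ into $\sigma_B,\sigma_A$ and the values $\tilde g(\cdots)$ back into $g(\cdots)$; everything else is a direct invocation of Corollary \ref{18}, so I anticipate no essential obstacle. As an alternative, (ii) can be proved directly from the integral representation \eqref{eq-soc}: writing $g(B)-g(A)=\int_{(-\infty,a]}\big[(B-\lambda)^{-1}-(A-\lambda)^{-1}\big]\,d\mu(\lambda)$ and bounding each integrand below through Corollary \ref{19} applied to $B-\lambda,A-\lambda$ (whose norms are $\sigma_B-\lambda,\sigma_A-\lambda$) reproduces the scalar increments $\tfrac{1}{\sigma_B-\lambda}-\tfrac{1}{\sigma_B-\lambda+m}$, which integrate back to $g(\sigma_B)-g(\sigma_B+m)$.
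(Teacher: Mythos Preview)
Your proof is correct, and your primary route---translating by $a$ to reduce to Corollary~\ref{18} on $(0,\infty)$---is genuinely different from the paper's. The paper works directly with the integral representation~\eqref{eq-soc}: for each $\lambda\le a$ it applies Corollary~\ref{19} to the positive invertible pair $A-\lambda,\,B-\lambda$, computes $\|B-\lambda\|=\sigma_B-\lambda$ and $\|A-\lambda\|=\sigma_A-\lambda$, and then integrates the resulting pointwise bounds against $d\mu(\lambda)$ to recover $g(\sigma_B)-g(\sigma_B+m)$ and $g(\sigma_A-m)-g(\sigma_A)$; part~(i) emerges along the way from the strict positivity of the integrand. Your reduction is more economical, since all of that integral bookkeeping is already packaged inside Corollary~\ref{18}, and the translation-invariance of the class $\textbf{SOC}$ (via Theorem~\ref{1}) is the only new ingredient you need to check. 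The paper's approach, on the other hand, is self-contained and makes transparent how the lower bounds arise $\lambda$-by-$\lambda$ from Corollary~\ref{19}; this is exactly the method you sketch as your ``alternative'' at the end, so you have both arguments in hand.
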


\begin{proof}
\begin{enumerate}[(i)]
\item Let $A$ and $B$ be self-adjoint operators with spectra in $(a, \infty)$ and $a>-\infty$. If $A>B$, then there is a scalar $m>0$ such that $A-B\geq m>0$. Put $g_{\lambda}(t)=\frac{1}{t-\lambda}$ on $(a, \infty)$ for $\lambda\in (-\infty,a]$. Notice that $(A-\lambda)$ and $(B-\lambda)$ are positive invertible operators and $(A-\lambda)-(B-\lambda)=(A-B)\geq m>0$. By Corollary \ref{19}, we have
\begin{align}\label{eq11}
g_{\lambda}(B)-g_{\lambda}(A)&=(B-\lambda)^{-1}-(A-\lambda)^{-1}\notag\\
&\geq m \Vert B-\lambda \Vert^{-1} (\Vert B-\lambda\Vert +m)^{-1}\\
&\geq m (\Vert A-\lambda\Vert -m)^{-1} \Vert A-\lambda \Vert^{-1}\notag\\
&>0 \notag.
\end{align}
Now, suppose $0\neq g\in$ {\rm{\textbf{SOC}}}$(a, \infty)$. It follows from the integral representation \eqref{eq-soc} that
\begin{equation}\label{eq14}
g(B)-g(A)=\int\limits_{(-\infty ,a]} (g_{\lambda}(B)-g_{\lambda}(A)) d \mu (\lambda)>0,
\end{equation}
and so $g(A)<g(B)$. 
\item  Let $A$ and $B$ be self-adjoint operators with spectra in $(a, \infty)$ such that $A-B\geq m>0$ and $\sigma_{B}=\max {\rm sp}(B)$. To acheive the first inequality, note that
\begin{align*}
\Vert B-\lambda \Vert &=\max \left\lbrace \sigma \vert \,\, \sigma \in {\rm sp}(B-\lambda)\right\rbrace\\
&=\max  \left\lbrace  \sigma-\lambda \vert \,\, \sigma \in {\rm sp}(B)\right\rbrace\\
&=\sigma_{B}-\lambda.
\end{align*}
According to \eqref{eq14} and the first inequality of \eqref{eq11}, we have
\begin{align*}
g(B)-g(A)&\geq m \int\limits_{(-\infty ,a]}  \Vert B-\lambda\Vert^{-1} ( \Vert B-\lambda\Vert +m)^{-1} d \mu (\lambda)\\
&=m \int\limits_{(-\infty ,a]}  (\sigma_{B}-\lambda)^{-1} (\sigma_{B}-\lambda+m)^{-1} d \mu (\lambda)\\
&= \int\limits_{(-\infty ,a]} \left[(\sigma_{B}-\lambda+m)-(\sigma_{B}-\lambda)\right] (\sigma_{B}-\lambda)^{-1} (\sigma_{B}-\lambda+m)^{-1} d \mu (\lambda)\\
&= \int\limits_{(-\infty ,a]} \left[ (\sigma_{B}-\lambda)^{-1}-(\sigma_{B}-\lambda+m)^{-1}\right]  d\mu (\lambda)\\
&=g(\sigma_{B})-g(\sigma_{B}+m) \qquad (\text{by}\,\, \eqref{eq-soc}),
\end{align*}
and so we obtain the first required inequality
\begin{equation*}
g(B)-g(A)\geq g(\sigma_{B})-g(\sigma_{B}+m).
\end{equation*}
To prove the second desired inequality,  let $\sigma_{A}=\max {\rm sp}(A)$. Applying the above computations with the second inequality of \eqref{eq11}, we obtain that
\begin{align*}
g(\sigma_{B})-g(\sigma_{B}+m)&= m \int\limits_{(-\infty ,a]}  \Vert B-\lambda\Vert^{-1} ( \Vert B-\lambda\Vert +m)^{-1} d \mu (\lambda)\\
&\geq m \int\limits_{(-\infty ,a]} (\Vert A-\lambda\Vert -m)^{-1} \Vert A-\lambda \Vert^{-1} d \mu (\lambda)\\
&= m \int\limits_{(-\infty ,a]} \left[  (\sigma_{A}-\lambda-m)^{-1} (\sigma_{A}-\lambda)^{-1}  \right] d \mu (\lambda)\\
&= \int\limits_{(-\infty ,a]} \left[  (\sigma_{A}-\lambda-m)^{-1} -(\sigma_{A}-\lambda)^{-1}  \right] d \mu (\lambda)\\
&=g(\sigma_{A}-m)-g(\sigma_{A}) \qquad (\text{by}\,\, \eqref{eq-soc}).
\end{align*}
Hence, we get
\begin{equation*}
g(\sigma_{B})-g(\sigma_{B}+m)\geq g(\sigma_{A}-m)-g(\sigma_{A}). 
\end{equation*}
The last desired inequality is evidently holds. We have $(\sigma_{A}-m)< \sigma_{A}$ and according to (i), any $ g\in$ {\rm{\textbf{SOC}}}$(a, \infty)$ is strictly operator decreasing on $(a, \infty)$. Therefore  $g(\sigma_{A}-m)-g(\sigma_{A})>0$.
\end{enumerate}
\end{proof}

Theorem \ref{20} is an extension of Corollary \ref{18}. The function $g(t)=\frac{1}{t^p}\in$ {\rm{\textbf{SOC}}}$(0, \infty)$  for $0< p\leq 1$.  Hence, Theorem \ref{20} with $g(t)=\frac{1}{t^p}$ gives the following improvement of Corollary \ref{19}. 
\begin{cor}
Let $A$ and $B$ be positive invertible operators such that $A-B\geq m>0$ for some scalar $m>0$ and $0< p\leq 1$. Then
\begin{equation*}
B^{-p}-A^{-p}\geq  ( \sigma_{B} )^{-p}- (\sigma_{B} +m)^{-p} \geq (\sigma_{A} -m)^{-p}-( \sigma_{A})^{-p}>0,
\end{equation*}
which is equivalent to
\begin{equation*}
B^{-p}-A^{-p}\geq \Vert B \Vert^{-p}- (\Vert B\Vert +m)^{-p} \geq  (\Vert A\Vert -m)^{-p}- \Vert A \Vert^{-p}>0.
\end{equation*}
\end{cor}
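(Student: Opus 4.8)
The plan is to obtain this corollary as a direct specialization of Theorem \ref{20} to the single function $g(t)=t^{-p}=\frac{1}{t^p}$. The first task is to confirm that $g\in\textbf{SOC}(0,\infty)$ for $0<p\leq 1$. By Theorem \ref{1}(i) with $a=0$, it suffices to check that $g(t)>0$ on $(0,\infty)$ and that $g$ is operator decreasing there. Positivity is immediate since $t^{-p}>0$ for $t>0$, and the operator decreasing property is the standard consequence of the operator monotonicity of $t\mapsto t^{p}$ for $0\leq p\leq 1$ (equivalently, $t^{-p}$ is operator decreasing in this range). Since $g$ is non-constant whenever $p>0$, we have a genuinely non-constant $0\neq g\in\textbf{SOC}(0,\infty)$, exactly as the hypotheses of Theorem \ref{20} demand.

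Next I would apply Theorem \ref{20}(ii) to $A$ and $B$, which are positive invertible and hence self-adjoint with spectra in $(0,\infty)$, and which satisfy $A-B\geq m>0$. Writing $\sigma_{A}=\max{\rm sp}(A)$ and $\sigma_{B}=\max{\rm sp}(B)$ and substituting $g(t)=t^{-p}$ into the chain
\[
g(B)-g(A)\geq g(\sigma_{B})-g(\sigma_{B}+m)\geq g(\sigma_{A}-m)-g(\sigma_{A})>0
\]
yields at once
\[
B^{-p}-A^{-p}\geq \sigma_{B}^{-p}-(\sigma_{B}+m)^{-p}\geq(\sigma_{A}-m)^{-p}-\sigma_{A}^{-p}>0,
\]
which is precisely the first asserted inequality.

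Finally, to pass to the equivalent norm formulation, I would invoke the elementary fact that for a positive operator the operator norm coincides with the top of the spectrum, namely $\sigma_{A}=\Vert A\Vert$ and $\sigma_{B}=\Vert B\Vert$; this holds because the spectrum of a positive operator is a compact subset of $[0,\infty)$ whose maximum equals the spectral radius and hence the norm. Substituting $\Vert A\Vert$ and $\Vert B\Vert$ for $\sigma_{A}$ and $\sigma_{B}$ in the displayed chain produces the second inequality. I do not anticipate any real obstacle: the whole argument is a substitution into Theorem \ref{20}, and the only points meriting a word of justification are the membership $t^{-p}\in\textbf{SOC}(0,\infty)$ and the identity $\max{\rm sp}(\cdot)=\Vert\cdot\Vert$ for positive operators.
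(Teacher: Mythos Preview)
Your proposal is correct and follows essentially the same approach as the paper: the paper simply notes that $g(t)=\frac{1}{t^{p}}\in\textbf{SOC}(0,\infty)$ for $0<p\leq 1$ and applies Theorem~\ref{20}, which is exactly what you do. Your write-up is in fact more detailed than the paper's, since you explicitly justify the membership via Theorem~\ref{1}(i) and the identity $\sigma_{A}=\Vert A\Vert$ for positive operators.
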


As a direct consequence of Theorem \ref{20}, we deduce the following extension of Theorem \ref{17}.  
\begin{cor}\label{23}
Let $f$  be a non-constant and negative operator monotone function on $(a, \infty)$ and $a>-\infty $. If $A$ and $B$ are self-adjoint operators with spectra in $(a, \infty)$ such that $A-B\geq m>0$ for some scalar $m>0$, then
\begin{equation*}
f(A)-f(B)\geq f(\sigma_{B}+m)-f(\sigma_{B}) \geq f(\sigma_{A})-f(\sigma_{A}-m)>0,
\end{equation*}
where $\sigma_{A}=\max {\rm sp}(A)$ and $\sigma_{B}=\max {\rm sp}(B)$.
\end{cor}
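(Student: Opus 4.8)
The plan is to derive this corollary directly from Theorem \ref{20} by passing to the function $g = -f$. First I would observe that, since $f$ is a negative function on $(a, \infty)$, the function $g(t) = -f(t)$ satisfies $g(t) > 0$ on $(a, \infty)$; and since $f$ is operator monotone, $g = -f$ is operator decreasing on $(a, \infty)$. Moreover, because $f$ is non-constant, so is $g$, whence $g \neq 0$. By the Brown--Uchiyama characterization in Theorem \ref{1}(i), a non-constant continuous function on $(a, \infty)$ lies in $\mathbf{SOC}(a, \infty)$ precisely when it is positive and operator decreasing there; therefore $g \in \mathbf{SOC}(a, \infty)$.

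With this membership established, I would apply Theorem \ref{20}(ii) to $g$, the operators $A$ and $B$, and the same scalar $m$, noting that $\sigma_{A} = \max \mathrm{sp}(A)$ and $\sigma_{B} = \max \mathrm{sp}(B)$ are unaffected by the substitution. Theorem \ref{20}(ii) then yields
\begin{equation*}
g(B) - g(A) \geq g(\sigma_{B}) - g(\sigma_{B} + m) \geq g(\sigma_{A} - m) - g(\sigma_{A}) > 0.
\end{equation*}

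Finally, I would substitute $g = -f$ termwise, using $g(B) - g(A) = f(A) - f(B)$, $g(\sigma_{B}) - g(\sigma_{B} + m) = f(\sigma_{B} + m) - f(\sigma_{B})$, and $g(\sigma_{A} - m) - g(\sigma_{A}) = f(\sigma_{A}) - f(\sigma_{A} - m)$. This converts the displayed chain into exactly the claimed inequality, completing the argument.

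There is essentially no analytic obstacle here: the only point requiring care is confirming the hypotheses of Theorem \ref{20}, namely that $g = -f$ is a nonzero member of $\mathbf{SOC}(a, \infty)$, and this is immediate from Theorem \ref{1}(i). The remainder is a sign-reversing substitution, and the final strict positivity is inherited directly from the strict inequality in Theorem \ref{20}(ii) (equivalently, from the strict operator monotonicity recorded in part (i) of that theorem).
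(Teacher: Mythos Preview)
Your proposal is correct and follows essentially the same approach as the paper: set $g=-f$, verify via Theorem~\ref{1}(i) that $g\in\mathbf{SOC}(a,\infty)$, apply Theorem~\ref{20}(ii), and reverse signs. The paper's proof is just a terser version of exactly this argument.
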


\begin{proof}
Put $g(t)=-f(t)$. Then $g(t)>0$ and $g$ is an operator decreasing function on $(a, \infty)$. Thus $(-f(t))\in$ {\rm{\textbf{SOC}}}$(a, \infty)$ and consequently, Theorem \ref{20} gives the desired inequalities.
\end{proof}

\begin{cor}\label{22}
Let $0\neq g\in$ {\rm{\textbf{SOC}}}$(a, \infty)$ with $a>-\infty$ and $A, B$ be self-adjoint operators with spectra in $(a, \infty)$. If $A>B$, then
\begin{equation*}
g(B)-g(A)\geq g(\sigma_{B})-g\left(\sigma_{B}+\Vert (A-B)^{-1}\Vert^{-1}\right) \geq g\left(\sigma_{A}-\Vert (A-B)^{-1}\Vert^{-1}\right)-g(\sigma_{A})>0,
\end{equation*}
where $\sigma_{A}=\max {\rm sp}(A)$ and $\sigma_{B}=\max {\rm sp}(B)$.
\end{cor}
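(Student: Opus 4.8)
The plan is to reduce Corollary \ref{22} directly to Theorem \ref{20}(ii) by producing the sharpest admissible scalar lower bound for the gap $A-B$. Since the hypothesis is $A>B$ rather than $A-B\geq m$ for a prescribed $m$, the only real task is to identify the best possible $m$ compatible with the hypothesis and then invoke the already-proved estimate.

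First I would observe that $A>B$ means precisely that $C:=A-B$ is a positive invertible operator. The central (and essentially only) step is the spectral identity $\min {\rm sp}(C)=\Vert C^{-1}\Vert^{-1}$: because $C>0$, its inverse $C^{-1}$ is positive with ${\rm sp}(C^{-1})=\{\lambda^{-1}:\lambda\in {\rm sp}(C)\}$, whence $\Vert C^{-1}\Vert=\max {\rm sp}(C^{-1})=\big(\min {\rm sp}(C)\big)^{-1}$. By the functional calculus a positive operator dominates its smallest spectral value times the identity, so
\begin{equation*}
A-B=C\geq \big(\min {\rm sp}(C)\big)\, I=\Vert (A-B)^{-1}\Vert^{-1}\, I>0.
\end{equation*}
Thus the scalar $m:=\Vert (A-B)^{-1}\Vert^{-1}$ satisfies $A-B\geq m>0$, exactly the hypothesis required by Theorem \ref{20}(ii).

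Having secured this $m$, I would simply apply Theorem \ref{20}(ii) verbatim: with $\sigma_{A}=\max {\rm sp}(A)$ and $\sigma_{B}=\max {\rm sp}(B)$, it yields
\begin{equation*}
g(B)-g(A)\geq g(\sigma_{B})-g(\sigma_{B}+m)\geq g(\sigma_{A}-m)-g(\sigma_{A})>0,
\end{equation*}
and substituting $m=\Vert (A-B)^{-1}\Vert^{-1}$ gives precisely the claimed chain of inequalities. The positivity of the final expression is inherited from Theorem \ref{20}(ii), which rests on the strict operator decrease of $g$ established in Theorem \ref{20}(i).

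I do not anticipate a genuine obstacle here, since the corollary is a specialization of Theorem \ref{20}. The one point deserving a careful sentence is the spectral identity $\min {\rm sp}(A-B)=\Vert (A-B)^{-1}\Vert^{-1}$, guaranteeing that $m$ is both strictly positive and the largest scalar for which $A-B\geq m$ holds; this is what makes $m=\Vert (A-B)^{-1}\Vert^{-1}$ the correct and sharp choice to feed into Theorem \ref{20}(ii).
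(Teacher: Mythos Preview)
Your proposal is correct and follows essentially the same approach as the paper: you observe that $A>B$ implies $A-B\geq \Vert (A-B)^{-1}\Vert^{-1}>0$ and then feed $m=\Vert (A-B)^{-1}\Vert^{-1}$ into Theorem~\ref{20}(ii). The paper's proof is terser, but your added justification of the spectral identity $\min{\rm sp}(A-B)=\Vert (A-B)^{-1}\Vert^{-1}$ is a welcome clarification of the one nontrivial point.
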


\begin{proof}
If $A>B$, then there is $ m>0$ such that
\begin{equation*}
A-B\geq \frac{1}{\Vert (A-B)^{-1}\Vert}\geq m>0.
\end{equation*}
Putting $m=  \Vert (A-B)^{-1}\Vert^{-1}$ in Theorem \ref{20}, we arrive at the desired inequalities.
\end{proof}

Corollary \ref{22} with the celebrated L\"owner-Heinz inequality leads to the following result.
\begin{cor}
Let $0\neq g\in$ {\rm{\textbf{SOC}}}$(a, \infty)$ with $a>-\infty$ and $A, B$ be self-adjoint operators with spectra in $(a, \infty)$. If $A>B$ and $0<p\leq 1$, then
\begin{equation*}
g(B^{p})-g(A^{p})\geq g(\sigma_{B})-g\left(\sigma_{B}+m\right) \geq g\left(\sigma_{A}-m\right)-g(\sigma_{A})>0.
\end{equation*}
in which $\sigma_{A}=\max {\rm sp}(A^{p})$, $\sigma_{B}=\max {\rm sp}(B^{p})$ and $m=\Vert (A^{p}-B^{p})^{-1}\Vert^{-1}$.
\end{cor}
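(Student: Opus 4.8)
The strategy is to reduce the statement directly to Corollary \ref{22} by substituting $A^{p}$ and $B^{p}$ for $A$ and $B$; the celebrated L\"owner--Heinz inequality is exactly what legitimises this substitution. Note first that, for $A^{p}$ and $B^{p}$ to be meaningful with $0<p\le 1$, the operators $A$ and $B$ are taken positive invertible, so $A>B$ means $A>B>0$. The function $t\mapsto t^{p}$ is operator monotone on $[0,\infty)$ for $0<p\le 1$, whence the L\"owner--Heinz inequality applied to $A\ge B\ge 0$ yields $A^{p}\ge B^{p}$. This must be upgraded to a strict inequality: since $t\mapsto t^{p}$ is a non-constant operator monotone function on $[0,\infty)$, it is strictly operator monotone in the sense recalled in Section \ref{sec1}, so $A>B>0$ forces $A^{p}>B^{p}$. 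In particular $A^{p}-B^{p}$ is positive invertible, and hence the scalar $m=\Vert (A^{p}-B^{p})^{-1}\Vert^{-1}$ is well defined and strictly positive.

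Next I would check that the hypotheses of Corollary \ref{22} hold for the pair $(A^{p},B^{p})$. Since $A,B>0$ and $0<p\le 1$, the operators $A^{p}$ and $B^{p}$ are self-adjoint with spectra in $(a,\infty)$ (indeed in $(0,\infty)$), so $g$ may be applied to them and to the scalars $\sigma_{A}=\max{\rm sp}(A^{p})$, $\sigma_{B}=\max{\rm sp}(B^{p})$ and to $\sigma_{B}+m$, $\sigma_{A}-m$, the last of which lies in the domain because $A^{p}\ge B^{p}+m$ gives $\sigma_{A}-m\ge\sigma_{B}>a$. Combined with the strict inequality $A^{p}>B^{p}$ obtained above, every hypothesis of Corollary \ref{22} is now in place.

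Finally I would invoke Corollary \ref{22} verbatim, with $A^{p}$ playing the role of $A$ and $B^{p}$ playing the role of $B$. Its conclusion then reads
\begin{equation*}
g(B^{p})-g(A^{p})\ge g(\sigma_{B})-g\!\left(\sigma_{B}+m\right)\ge g\!\left(\sigma_{A}-m\right)-g(\sigma_{A})>0,
\end{equation*}
with $\sigma_{A}=\max{\rm sp}(A^{p})$, $\sigma_{B}=\max{\rm sp}(B^{p})$ and $m=\Vert (A^{p}-B^{p})^{-1}\Vert^{-1}$, which is precisely the assertion to be proved.

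The only genuinely substantive point --- and the step I would flag as the main obstacle --- is passing from $A>B$ to the \emph{strict} operator inequality $A^{p}>B^{p}$. The L\"owner--Heinz inequality by itself delivers only $A^{p}\ge B^{p}$, and it is the strictness (which is what makes $m$ positive and renders the terminal ``$>0$'' meaningful) that has to be extracted separately, via the strict operator monotonicity of the non-constant operator monotone function $t\mapsto t^{p}$. Everything beyond this is a direct application of the already-established Corollary \ref{22}.
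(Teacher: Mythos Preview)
Your proposal is correct and matches the paper's approach exactly: the paper itself records no proof beyond the single remark that the result follows from Corollary~\ref{22} together with the L\"owner--Heinz inequality, and your argument is the natural elaboration of that remark. Your explicit handling of the passage from $A>B$ to the \emph{strict} inequality $A^{p}>B^{p}$ (via the strict operator monotonicity of $t\mapsto t^{p}$, so that $m=\Vert (A^{p}-B^{p})^{-1}\Vert^{-1}$ is well defined) is a point the paper leaves implicit, so if anything you are being more careful than the original.
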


In the remaining of this section, we study the strongly operator convex functions defined on a left half line.
\begin{thm}\label{21}
Let $0\neq g\in$ {\rm{\textbf{SOC}}}$(-\infty, b)$ with $b<\infty$ and $A, B$ be self-adjoint operators with spectra in $(-\infty, b)$. Then 
\begin{enumerate}[(i)]
\item If $A>B$, then $g(A)>g(B)$.
\item If $A-B\geq m>0$ for some scalar $m>0$, then
\begin{equation*}
g(A)-g(B)\geq g(\tau_{A})-g(\tau_{A}-m)\geq g(\tau_{B}+m)-g(\tau_{B})>0,
\end{equation*}
where $\tau_{A}=\min {\rm sp}(A)$ and $\tau_{B}=\min {\rm sp}(B)$.
\end{enumerate}
\end{thm}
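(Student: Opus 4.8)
The plan is to mirror the proof of Theorem \ref{20}, replacing the representation \eqref{eq-soc} by \eqref{eq-soc1} and working with the elementary building blocks $g_{\lambda}(t)=\frac{1}{\lambda-t}$ for $\lambda\in[b,\infty)$. First I would observe that each such $g_{\lambda}$ is a positive operator monotone function on $(-\infty,b)$, so that $g_{\lambda}\in\textbf{SOC}(-\infty,b)$, and that by \eqref{eq-soc1} every $0\neq g\in\textbf{SOC}(-\infty,b)$ is an integral (plus the constant $g(-\infty)$) of these $g_{\lambda}$ against a positive measure $\mu$ on $[b,\infty)$. As in Theorem \ref{20}, the case $A>B$ reduces to the case $A-B\geq m>0$ by choosing a scalar $m>0$ with $A-B\geq m$.

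Next, for $A,B$ with spectra in $(-\infty,b)$ and $A-B\geq m>0$, I would set $P=\lambda-A$ and $Q=\lambda-B$. Since $\lambda\geq b$ and the spectra lie in $(-\infty,b)$, both $P$ and $Q$ are positive invertible, and $Q-P=A-B\geq m>0$. Applying Corollary \ref{19} with the larger operator $Q$ in place of $A$ and the smaller $P$ in place of $B$ yields
\begin{equation*}
g_{\lambda}(A)-g_{\lambda}(B)=P^{-1}-Q^{-1}\geq m\Vert P\Vert^{-1}(\Vert P\Vert+m)^{-1}\geq m(\Vert Q\Vert-m)^{-1}\Vert Q\Vert^{-1}>0.
\end{equation*}
The spectral mapping theorem gives $\Vert P\Vert=\lambda-\tau_{A}$ and $\Vert Q\Vert=\lambda-\tau_{B}$, where $\tau_{A}=\min{\rm sp}(A)$ and $\tau_{B}=\min{\rm sp}(B)$; integrating the strict inequality against $d\mu(\lambda)$ over $[b,\infty)$ and recalling \eqref{eq-soc1} proves (i).

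For (ii), I would integrate the two explicit lower bounds separately, exactly as in Theorem \ref{20}. The telescoping identity $m=(\lambda-\tau_{A}+m)-(\lambda-\tau_{A})$ turns the first bound into $\int_{[b,\infty)}[(\lambda-\tau_{A})^{-1}-(\lambda-\tau_{A}+m)^{-1}]\,d\mu(\lambda)$, which by \eqref{eq-soc1} is exactly $g(\tau_{A})-g(\tau_{A}-m)$; the analogous manipulation of the second bound produces $g(\tau_{B}+m)-g(\tau_{B})$. This yields the chain $g(A)-g(B)\geq g(\tau_{A})-g(\tau_{A}-m)\geq g(\tau_{B}+m)-g(\tau_{B})$, and the final strict positivity follows from (i) applied to the scalars $\tau_{B}+m>\tau_{B}$.

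The main point to watch, and where the argument departs from Theorem \ref{20}, is the reversal of orientation. Because $g_{\lambda}$ is operator increasing rather than decreasing, one must track carefully that $Q>P$ (not the reverse), so that Corollary \ref{19} is applied in the correct direction, and that the relevant norm is now governed by the \emph{minimum} of the spectrum, giving $\Vert\lambda-A\Vert=\lambda-\tau_{A}$. One should also verify that the evaluation points remain in the domain $(-\infty,b)$: the bound $\tau_{A}-m<\tau_{A}<b$ is immediate, while $\tau_{B}+m<b$ requires the observation that $A-B\geq m$ forces $\tau_{A}\geq\tau_{B}+m$, whence $\tau_{B}+m\leq\tau_{A}<b$. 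These are the only genuinely new verifications; the remainder is a direct transcription of the previous argument.
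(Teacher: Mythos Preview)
Your proposal is correct and follows essentially the same route as the paper's proof: both reduce to the building blocks $g_{\lambda}(t)=(\lambda-t)^{-1}$, apply Corollary~\ref{19} to the positive invertible operators $\lambda-A$ and $\lambda-B$ (with the roles reversed relative to Theorem~\ref{20}), identify $\Vert\lambda-A\Vert=\lambda-\tau_{A}$, and integrate against the representing measure in \eqref{eq-soc1}. Your additional check that $\tau_{A}-m$ and $\tau_{B}+m$ remain in $(-\infty,b)$ is a welcome bit of care that the paper leaves implicit.
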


\begin{proof}
\begin{enumerate}[(i)]
\item The proof is similar to that of Theorem \ref{20}. Suppose $A$ and $B$ be self-adjoint operators with spectra in $(-\infty, b)$ and $b<\infty$. Assume that $g_{\lambda}(t)=\frac{1}{\lambda-t}$ on $(-\infty, b)$ for $\lambda\in [b,\infty)$. If $A>B$, then there is a scalar $m>0$ such that $A-B\geq m>0$. Clearly, $(\lambda-B)-(\lambda-A)=(A-B)\geq m>0$ and $(\lambda-A)$ and $(\lambda-B)$ are positive invertible operators. Therefore, Corollary \ref{19} gives
\begin{align}\label{eq12}
g_{\lambda}(A)-g_{\lambda}(B)&=(\lambda-A)^{-1}-(\lambda-B)^{-1}\notag\\
&\geq m \Vert \lambda-A \Vert^{-1} (\Vert \lambda-A\Vert +m)^{-1}\\
&\geq m (\Vert \lambda -B\Vert -m)^{-1} \Vert \lambda -B \Vert^{-1}\notag\\
&>0 \notag.
\end{align}
If $0\neq g\in$ {\rm{\textbf{SOC}}}$(-\infty, b)$, then the integral representation \eqref{eq-soc1} implies that
\begin{equation}\label{eq15}
g(A)-g(B)=\int\limits_{[b,\infty)} (g_{\lambda}(A)-g_{\lambda}(B)) d \mu (\lambda)>0.
\end{equation}
Thus, any non-constant strongly operator convex function on $(-\infty, b)$ is strictly operator monotone on $(-\infty, b)$.
\item Let $A,B$ be self-adjoint operators with spectra in $(-\infty, b)$ such that $A-B\geq m>0$. Put $\tau_{A}=\min {\rm sp}(A)$ and $\tau_{B}=\min {\rm sp}(B)$. We have
\begin{align*}
\Vert \lambda -A \Vert &=\max \left\lbrace \sigma \vert \,\, \sigma \in {\rm sp}( \lambda -A)\right\rbrace\\
&=\max  \left\lbrace  \lambda-\sigma \vert \,\, \sigma \in {\rm sp}(A)\right\rbrace\\
&=\lambda-\tau_{A}.
\end{align*}
Now, applying \eqref{eq15} and \eqref{eq12} via a similar way as in the proof of Theorem \ref{20}, we conclude the desired inequalities. We omit the details of the proof. Indeed, we obtain
\begin{equation*}
g(A)-g(B)\geq g(\tau_{A})-g(\tau_{A}-m)\geq g(\tau_{B}+m)-g(\tau_{B}).
\end{equation*} 
We would remark that $(\tau_{B}+m)>\tau_{B} $ and $ g\in$ {\rm{\textbf{SOC}}}$(-\infty, b)$ is a strictly operator monotone function. Therefore, $g(\tau_{B}+m)-g(\tau_{B})>0$.
\end{enumerate}
\end{proof}

\begin{cor}
Let $0\neq g\in$ {\rm{\textbf{SOC}}}$(-\infty, b)$ with $b<\infty$ and $A, B$ be self-adjoint operators with spectra in $(-\infty, b)$. If $A>B$. Then
\begin{equation*}
g(A)-g(B)\geq g(\tau_{A})-g\left(\tau_{A}-\Vert (A-B)^{-1}\Vert^{-1}\right)\geq g\left(\tau_{B}+\Vert (A-B)^{-1}\Vert^{-1}\right)-g(\tau_{B})>0,
\end{equation*}
in which $\tau_{A}=\min {\rm sp}(A)$ and $\tau_{B}=\min {\rm sp}(B)$.
\end{cor}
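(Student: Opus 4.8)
The plan is to reduce this corollary to Theorem \ref{21}(ii) by converting the strict inequality $A>B$ into a quantitative spectral gap $A-B\geq m$ for an explicit positive scalar $m$, exactly as was done for the right half line in Corollary \ref{22}.

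First I would observe that $A>B$ means precisely that $A-B$ is a positive invertible operator. For any positive invertible $X$ one has the spectral bound $X\geq (\min {\rm sp}(X))I$, and $\min {\rm sp}(X)=\Vert X^{-1}\Vert^{-1}$, since the spectrum of $X^{-1}$ consists of the reciprocals of the spectrum of $X$ and its norm coincides with its largest spectral value. Applying this with $X=A-B$ yields
\begin{equation*}
A-B\geq \Vert (A-B)^{-1}\Vert^{-1}>0.
\end{equation*}

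Next I would set $m=\Vert (A-B)^{-1}\Vert^{-1}$, a genuine positive scalar because $A-B$ is invertible, and feed this value into Theorem \ref{21}(ii). Since $A-B\geq m>0$, that theorem immediately gives
\begin{equation*}
g(A)-g(B)\geq g(\tau_{A})-g(\tau_{A}-m)\geq g(\tau_{B}+m)-g(\tau_{B})>0,
\end{equation*}
and substituting back $m=\Vert (A-B)^{-1}\Vert^{-1}$ reproduces exactly the claimed chain of inequalities, with $\tau_{A}=\min {\rm sp}(A)$ and $\tau_{B}=\min {\rm sp}(B)$.

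I do not expect a serious obstacle, as the argument is a direct specialization of Theorem \ref{21}(ii). The only point requiring a moment's care is the elementary spectral identity $\min {\rm sp}(A-B)=\Vert (A-B)^{-1}\Vert^{-1}$, which is what legitimizes replacing the abstract hypothesis $A>B$ by the concrete scalar lower bound $m$; once this is recorded, the conclusion follows at once, and the final strict positivity is guaranteed by the strict operator monotonicity of $g$ established in Theorem \ref{21}(i).
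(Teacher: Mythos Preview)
Your proposal is correct and follows essentially the same approach as the paper: you extract the scalar lower bound $m=\Vert (A-B)^{-1}\Vert^{-1}$ from the hypothesis $A>B$ and then invoke Theorem~\ref{21}(ii), exactly parallel to the paper's proof of Corollary~\ref{22} for the right half line.
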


\begin{cor}
 Let $0\neq g\in$ {\rm{\textbf{SOC}}}$(-\infty, b)$ with $b<\infty$ and $A, B$ be self-adjoint operators with spectra in $(-\infty, b)$. If $A>B$ and $0<p\leq 1$, then
\begin{equation*}
g(A^{p})-g(B^{p})\geq g(\tau_{A})-g\left(\tau_{A}-m\right)\geq g\left(\tau_{B}+m\right)-g(\tau_{B})>0,
\end{equation*}
where $\tau_{A}=\min {\rm sp}(A^{p})$, $\tau_{B}=\min {\rm sp}(B^{p})$ and $m=\Vert (A^{p}-B^{p})^{-1}\Vert^{-1}$.
\end{cor}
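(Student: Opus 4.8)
The plan is to deduce this corollary from Theorem \ref{21} by the same device that passes from Corollary \ref{22} to the corresponding statement for powers in the right half line case, namely by feeding the operators $A^{p}$ and $B^{p}$ into the half line estimate in place of $A$ and $B$. So the whole point is a reduction: once $A^{p}$ and $B^{p}$ are known to satisfy the hypotheses of Theorem \ref{21}(ii), the conclusion is read off with no further computation.

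First I would record, under the standing hypothesis $A>B$ (with $A,B$ positive and invertible, so that the fractional powers are meaningful), that the L\"owner--Heinz inequality gives $A^{p}\geq B^{p}$ for $0<p\leq 1$. To upgrade this to a strict inequality I would invoke that $t\mapsto t^{p}$ is a non-constant operator monotone function on $(0,\infty)$ and is therefore strictly operator monotone (the fact cited earlier from \cite{Furuta2} and \cite{MN2012}); hence $A>B$ forces $A^{p}>B^{p}$. Consequently $A^{p}-B^{p}$ is positive and invertible, and I would set $m=\Vert (A^{p}-B^{p})^{-1}\Vert^{-1}$, so that $A^{p}-B^{p}\geq m>0$, using that $\min{\rm sp}(X)=\Vert X^{-1}\Vert^{-1}$ for every positive invertible $X$.

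Next I would apply Theorem \ref{21}(ii) to the pair $A^{p},B^{p}$ with this value of $m$. Writing $\tau_{A}=\min{\rm sp}(A^{p})$ and $\tau_{B}=\min{\rm sp}(B^{p})$, the theorem delivers directly
\begin{equation*}
g(A^{p})-g(B^{p})\geq g(\tau_{A})-g(\tau_{A}-m)\geq g(\tau_{B}+m)-g(\tau_{B})>0,
\end{equation*}
which is precisely the asserted chain. Thus the corollary is immediate once the reduction is justified.

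The only genuine obstacle, in my view, is the applicability check for Theorem \ref{21}: one must verify that $A^{p}$ and $B^{p}$ are self-adjoint operators whose spectra lie in $(-\infty,b)$, so that $g$ acts on them and $g(A^{p}),g(B^{p})$ are defined. Since $t\mapsto t^{p}$ is increasing, $A^{p}$ and $B^{p}$ are positive with ${\rm sp}(A^{p}),{\rm sp}(B^{p})\subset(0,b^{p})$, and I would make explicit the condition under which $b^{p}\leq b$ (equivalently, that the powers remain inside the domain of $g$). Apart from this domain verification, every ingredient—L\"owner--Heinz, its strict form, and Theorem \ref{21}—is already available, so I expect no operator inequality to cause difficulty.
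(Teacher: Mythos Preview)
Your approach is correct and is precisely the paper's intended one: the paper gives no proof here, but the analogous right half-line corollary is obtained by combining Corollary~\ref{22} with the L\"owner--Heinz inequality, and the same reduction (strict L\"owner--Heinz to get $A^{p}>B^{p}$, then Theorem~\ref{21}(ii) with $m=\Vert(A^{p}-B^{p})^{-1}\Vert^{-1}$) is what is meant here. Your observation about the domain check is well taken---the paper's statement tacitly assumes $A,B>0$ and that ${\rm sp}(A^{p}),{\rm sp}(B^{p})\subset(-\infty,b)$---but this is a gap in the paper's hypotheses, not in your argument.
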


As a direct consequence of Theorem \ref{21}, we estimate lower bounds of $f(A)-f(B)$ when $f$ is an operator monotone function defined on a left half line and $A-B>0$.
\begin{cor}\label{24}
Let $f$  be a non-constant  and positive operator monotone function on $(-\infty, b)$  and $b<\infty $. If $A$ and $B$ are self-adjoint operators with spectra in $(-\infty, b)$ such that $A-B\geq m>0$ for some scalar $m>0$, then
\begin{equation*}
f(A)-f(B)\geq f(\tau_{A})-f(\tau_{A}-m)\geq f(\tau_{B}+m)-f(\tau_{B})>0,
\end{equation*}
where $\tau_{A}=\min {\rm sp}(A)$ and $\tau_{B}=\min {\rm sp}(B)$.
\end{cor}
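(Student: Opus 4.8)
The plan is to recognize that the hypotheses placed on $f$ are exactly those that force $f$ to be strongly operator convex on $(-\infty,b)$, so that Theorem~\ref{21} can be applied verbatim. By assumption, $f$ is positive, i.e. $f(t)>0$ for all $t\in(-\infty,b)$, and $f$ is operator monotone on $(-\infty,b)$. These are precisely the two conditions occurring in the characterization of part (ii) of Theorem~\ref{1}. Since $f$ is moreover non-constant (and in particular $0\neq f$), the ``if'' direction of that characterization yields $f\in\textbf{SOC}(-\infty,b)$.

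Once this membership is in hand, I would simply invoke Theorem~\ref{21} with $g=f$. Under the standing assumption $A-B\geq m>0$, part (ii) of that theorem produces exactly
\[
f(A)-f(B)\geq f(\tau_{A})-f(\tau_{A}-m)\geq f(\tau_{B}+m)-f(\tau_{B})>0,
\]
with $\tau_{A}=\min {\rm sp}(A)$ and $\tau_{B}=\min {\rm sp}(B)$, which is the assertion to be proved. The positivity of the final quantity is guaranteed by the strict operator monotonicity established in Theorem~\ref{21}(i), since $\tau_{B}+m>\tau_{B}$.

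There is essentially no obstacle to overcome here: the entire content of the corollary is the transport of the conclusion of Theorem~\ref{21} from the class $\textbf{SOC}(-\infty,b)$ to the a priori different class of positive operator monotone functions, and Theorem~\ref{1}(ii) identifies these two classes. The only point deserving a moment's care is the verification of the nondegeneracy hypotheses of Theorem~\ref{1}, namely that $f\neq 0$ and that $f$ is non-constant; both are immediate from the stated assumptions. Thus the corollary is a direct specialization of Theorem~\ref{21}.
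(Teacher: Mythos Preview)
Your proposal is correct and matches the paper's approach: the paper presents this corollary as a direct consequence of Theorem~\ref{21}, and the bridge is precisely Theorem~\ref{1}(ii), which identifies non-constant positive operator monotone functions on $(-\infty,b)$ with $\textbf{SOC}(-\infty,b)$. This is exactly the reduction you describe, so nothing more is needed.
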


\section*{Acknowledgments}
N. Gharakhanlu was supported by a grant from the Iran national Elites Foundation (INEF) for a postdoctoral fellowship under the supervision of M. S. Moslehian.

\bibliographystyle{amsplain}

\end{document}